\documentclass[a4, 10pt,leqno]{amsart}
\usepackage{pifont}
\usepackage{mathtools}
\usepackage{mathabx}
\usepackage{bbm}
\usepackage{amsthm}
\usepackage{mathtools}
\usepackage{mathtools,accents}
\usepackage{amssymb,amscd,amsthm,enumerate}
\usepackage{mathrsfs}
\usepackage{a4wide}
\usepackage{eqnarray}
\usepackage[utf8]{inputenc}
\usepackage[T1]{fontenc}
\usepackage{lmodern}
\usepackage[english]{babel}
\usepackage{microtype}
\usepackage{float}
\usepackage{esint}
\usepackage{physics}
\usepackage{a4wide}
\usepackage{algorithmic}
\usepackage[ruled]{algorithm2e}
\usepackage[active]{srcltx}
\usepackage{lipsum}
\usepackage{upgreek}
\usepackage{listings}
\usepackage{wasysym}
\usepackage{enumerate}
\usepackage{color}
\usepackage[usenames,dvipsnames]{xcolor}
\usepackage{xcolor}
\definecolor{ultrablue}{rgb}{0.0,0.0, 1}
\definecolor{jigari}{rgb}{0.39,0.0, 0.0}
\usepackage{url}
\usepackage{hyperref}
\hypersetup{
	linktoc=page,
	colorlinks,
	linkcolor={jigari},
	citecolor={jigari},
	urlcolor={jigari}
}
\hypersetup{breaklinks=true}
\usepackage{soul}
\usepackage{graphicx}
\usepackage{tikz}
\usepackage{tikzpagenodes}
\usepackage{scrlayer-scrpage}
\usepackage{tikz}
\usetikzlibrary{calc,spy}
\usetikzlibrary{shapes.geometric, arrows}
\usepackage{pgfplots}
\usetikzlibrary{intersections, pgfplots.fillbetween}
\usepackage{tikz}
\usetikzlibrary{matrix}
\usepackage[all]{xy}
\usepackage{subfig}
\newlength{\myeqskip}  
\AtBeginDocument{%
	\setlength\abovedisplayskip{\myeqskip}%
	\setlength\belowdisplayskip{\myeqskip}%
	\setlength\abovedisplayshortskip{\myeqskip-\baselineskip}%
	\setlength\belowdisplayshortskip{\myeqskip}}
\allowdisplaybreaks
\numberwithin{equation}{section}
\usepackage[a4paper,bindingoffset=0in,%
left=1.2in,
right=1.2in,
top=1.4in,
bottom=1.4 in,%
footskip=1mm]{geometry}
\theoremstyle{plain}
\newtheorem*{theorem*}{Main Theorem}
\newtheorem{lemma}{Lemma}[section]
\newtheorem{theorem}[lemma]{Theorem}

\theoremstyle{definition}

\newtheorem{remark}[lemma]{Remark}

\theoremstyle{remark}

\newcounter{example}
\newenvironment{example}[1]{\refstepcounter{example}\par\medskip
	\noindent \textsc{\small Example~\theexample. #1} \rmfamily\hspace{-2pt}}{\medskip}
\makeatletter
\def\@xnamedef#1{\expandafter\protected@xdef\csname #1\endcsname}
\def\no@harm{} 
\def\ead@au#1{\protected@edef\@ead@au{#1}}
\patchcmd\runningauthor@fmt{\global\edef}{\protected@xdef}{}{}
\patchcmd\runningauthor@fmt{\global\edef}{\protected@xdef}{}{}
\patchcmd\author@fmt{\edef}{\protected@edef}{}{}
\patchcmd\add@xtok{\xdef}{\protected@xdef}{}{}
\makeatother
\newcommand{\secref}[1]{\textsection\thinspace\ref{#1}}

\newcommand{\R}{\mathbb R}

\DeclareMathOperator{\Hess}{Hess}

\DeclareMathOperator{\Riem}{Riem}

\newcommand{\K}{\mathcal K}
\newcommand{\y}{\footnotesize \textbf{\textit{y}}}

\newcommand{\dist}{\mathrm{d}}

\newcommand{\cpt}{{\textsf{CPT}}\,}
\usepackage{accents}
\newcommand*{\dt}[1]{%
	\accentset{\mbox{\bfseries .}}{#1}}
\newcommand*{\ddt}[1]{%
	\accentset{\mbox{\bfseries .\hspace{-0.25ex}.}}{#1}}
\newcommand*{\dddt}[1]{%
	\accentset{\mbox{\bfseries .\hspace{-0.25ex}.\hspace{-0.25ex}.}}{#1}}

\makeatletter
\newcommand*\bigcdot{\mathpalette\bigcdot@{.5}}
\newcommand*\bigcdot@[2]{\mathbin{\vcenter{\hbox{\scalebox{#2}{$\m@th#1\bullet$}}}}}
\makeatother
\newcount\stylenum  \newdimen\styledim 
\def\varstyle#1{\mathchoice{\stylenum=0 #1}{\stylenum=1 #1}{\stylenum=2 #1}{\stylenum=3 #1}}
\def\mathaxis{\fontdimen22\ifcase\stylenum 
	\textfont\or\textfont\or\scriptfont\or\scriptscriptfont\fi2 }
\def\setstyledim{\styledim=\ifcase\stylenum .1em\or.1em\or.07em\or.05em\fi\relax}
\def\sqdot{\mathbin{\varstyle{\raise\mathaxis\hbox{\setstyledim
				\kern\styledim 
				\vrule width1.2\styledim height.6\styledim depth.6\styledim
				\kern\styledim}}}}

		\DeclareMathAlphabet{\mathdutchcal}{U}{dutchcal}{m}{n}
		\SetMathAlphabet{\mathdutchcal}{bold}{U}{dutchcal}{b}{n}
		\DeclareMathAlphabet{\mathdutchbcal}{U}{dutchcal}{b}{n}
		\DeclareSymbolFont{myletters}{OML}{ztmcm}{m}{it}
		\DeclareMathSymbol{\nicelambda}{\mathord}{myletters}{"15}
		\usepackage{nicefrac}
\usepackage{stackengine}
\setstackEOL{\\}
\newcounter{tmpctr}
\newcommand\fancyRoman[1]{%
	\setcounter{tmpctr}{#1}%
	\setbox0=\hbox{\kern.2pt\textsf{\Roman{tmpctr}}}%
	\setstackgap{S}{-.6pt}%
	\Shortstack{\rule{\dimexpr\wd0+.1ex}{.7pt}\\\copy0\\
		\rule{\dimexpr\wd0+.1ex}{.7pt}}%
}

		\usepackage{pifont}
		
\newcommand{\restr}{\raisebox{-.1908ex}{$\big|$}}
\makeatother
\newcommand{\ident}{\raisebox{0pt}{\scalebox{1.1}{$\mathbbm{1}$}}\hspace{-1pt}}

\makeatletter
\newcommand{\tpitchfork}{%
	\vbox{
		\baselineskip\z@skip
		\lineskip-.52ex
		\lineskiplimit\maxdimen
		\m@th
		\ialign{##\crcr\hidewidth\smash{$-$}\hidewidth\crcr$\pitchfork$\crcr}
	}%
}
\makeatletter
\newcommand{\tpmod}[1]{{\@displayfalse\pmod{#1}}}
\makeatother

\usepackage{stackengine,scalerel}

\newcommand\overstar[1]{\ThisStyle{\ensurestackMath{%
			\setbox0=\hbox{$\SavedStyle#1$}%
			\stackengine{0pt}{\copy0}{\kern0\ht0\smash{\SavedStyle\star}}{O}{c}{F}{T}{S}}}}

\newcommand\dunderline[3][-1pt]{{%
		\sbox0{#3}%
		\ooalign{\copy0\cr\rule[\dimexpr#1-#2\relax]{\wd0}{#2}}}}
\makeatletter
\@namedef{subjclassname@2020}{%
	\textup{2020} Mathematics Subject Classification}
\makeatother
\makeatletter 
\def\l@subsection{\@tocline{2}{0pt}{3pc}{6pc}{}}
\makeatother
\makeatletter 
\def\l@subsection{\@tocline{2}{0pt}{3pc}{6pc}{}}
\makeatother

\def\bysame{\leavevmode\hbox to3em{\hrulefill}\thinspace}
\begin{document}
\title[\scriptsize {The rigidity of conformal circle-preserving transformations on Berwaldian manifolds}]{\small The rigidity of conformal circle-preserving transformations\\ on Berwaldian manifolds} 

%
\author[\protect \scriptsize Z. Fathi]{Zohreh Fathi}
\author[\protect \scriptsize S. Lakzian]{Sajjad Lakzian$^{^{\scalebox{1}{$\star$}}}$}
\address{\noindent -- Z. Fathi \& S. Lakzian \newline \noindent School of Mathematics, \newline Institute for Research in Fundamental Sciences (IPM), \newline P. O. Box 19395-
	5746, Tehran, Iran}
\address{\noindent -- S. Lakzian \newline \noindent Department of Mathematical Sciences\newline Isfahan University of  Technology (IUT) \newline Isfahan 8415683111, Iran.}
\email{\href{mailto:slakzian@iut.ac.ir}{slakzian@iut.ac.ir}}
\subjclass[2020]{53C60; 53C24}
\keywords{Finslerian metric, Berwaldian manifold, conformal diffeomorphism, circle-preserving diffeomorphism, geodesically complete, Riccati equation.}
\thanks{ZF is partially supported by IPM grant No. 1403530043 and SL is partially supported by IPM grant No. 1405530313.}
\thanks{SL is supported by the INSF Grant No. 4030556, awarded by the “On
	the Frontiers of Mathematical Sciences” program.}
\thanks{$\raisebox{2pt}{$\star$}$\textit{the corresponding author}}
\maketitle
%
\begin{abstract}
\par \textsl{We prove that a complete Berwaldian manifold $\left(M,F\right)$ admitting a nontrivial conformal circle preserving transformation (\cpt for short) must be Riemannian, provided that it has a dense subset on which no flag curvature vanishes (in particular, if $(M,F)$ has positive or negative flag curvature).}
\end{abstract}
\date{\today}
\section{Introduction}
\par A geodesic circle in a Riemannian manifold, $\left(M^n,g\right)$, is a curve with constant first geodesic curvature $\kappa \ge 0$, and vanishing second geodesic curvature~\cite{NM}; also, see~\secref{subsec:geod-circ}. 
\par A circle preserving (also called concircular) transformation (\cpt for short) is a diffeomorphism $\varphi$ between two Riemannian manifolds (or domains within) that preserves geodesic circles. The fundamentals of the theory of {\cpt}\!s in the Riemannian setting were established by Yano in~\cite{Yano1}--\cite{Yano5}, followed by important contributions from~\cite{Ish-Tash,Ish, Tash1, Tash2, VO, Kan, Kuh}.
\par By Vogel's theorem, a Riemannian \cpt must be conformal, that is, the pullback metric by $\varphi$ is $\widebar{g} = \rho^{-2} g$, where the conformal factor $\rho^{-1}$ satisfies the local PDE system, 
\begin{align*}
(\rho^{-1})_{i;j} - (\rho^{-1})_i(\rho^{-1})_j = \lambda g_{ij},
\end{align*}
for some scalar function $\lambda$~\cite{Br,VO}; here, semicolon ($;$) means covariant differentiation. When $\lambda$ is constant, $\varphi$ is a homothety and will be called a trivial \cpt. 
\par Writing in coordinate-free manner, the nonlinear geometric PDE describing a \cpt is
\begin{align*}
\nabla^2 \left(\rho\right) =\nicefrac{1}{n} \, \Delta \left(\rho\right) g, \quad \rho>0,
\end{align*}
or if we work with the logarithmic conformal factor (to absorb the constraint $\rho>0$ in the equation), $\sigma = - \ln \rho$, the PDE becomes
\begin{align}\label{eq:pde-sigma}
\nabla^2 \sigma - d\sigma\otimes d\sigma =\nicefrac{1}{n} \left(\Delta \sigma - \|\nabla \sigma\|^2\right) g.
\end{align}
\subsection*{Finslerian {\cpt}\!s}
\par In Finslerian structures, most geometric objects are defined on $TM$ rather than on $M$ itself. Nevertheless, geodesic circles can be defined in the same fashion as in the Riemannian case, with the caveat that, the Cartan connection (along the lift of the underlying curve) is the best one suited for this purpose (as there are several other ones). Indeed, metric compatibility ensures that the covariant derivative of a projectable vector field along a curve $c$ remains projectable. Consequently, successive derivatives give rise to a Frenet-Serret-Jordan frame; for further details, see~\cite[Chapter 4]{BF}. 
\par \dunderline{1.2pt}{\small \textit{\textbf{\textsf{Convention:}}}} \textsl{In what follows, we omit mentioning the diffeomorphism $\varphi$ and write a \cpt as $\bar{F} = e^{\sigma} F$. Mostly, we work with the PDE in terms of $\sigma$; however, when more convenient, we write quantities in terms of $\rho = e^{-\sigma}$.} 
\subsubsection{Conformality is not automatic!}\label{sec:conformality}
In the Finslerian setting, conformality of a \cpt fails to be automatic; the following counterexamples illustrate this. 
\begin{example}[Shen-Yang~\cite{SY}]
	In 2D, there exist infinitely many Minkowskian Randers metrics that admit \cpt\!s which are not conformal. To demonstrate their existence, for a constant $b\neq 0$, consider the Minkowskian Randers Finslerian length,
	\begin{align*}
		F (\y) = \sqrt{\|\y\|_{\sf Euc}} + (b,0)\;\;{\large \dt{}}_{_{\,\sf Euc}} \; \y.
	\end{align*}
	Let $V$ be the vector field in $\R^2$ that generates rotations around the origin. Thanks to the criterion established in~\cite[Theorem~5.1]{SY}, we know that $V$ is a circle-preserving vector field (its flow $\Phi_t$ consists of Finslerian \cpt\!s) but it is not a conformal vector field since $\mathcal{L}_{V^c}g$ fails to be a scalar multiple of $g$ where $V^c$ is the following lifted vector field to $TT\R^2$:
	\begin{align*}
		V^c = V^i\partial_i + \left<\grad^{\y} V^i, \y \right>_{\!\y}\dt{\partial}_i, \quad 1\le i\le 2.
	\end{align*}
	Consequently, for an infinite subset $D\subset \R$ (with $0$ as a limit point), the maps $\Phi_t$ with $t \in D$ are Finslerian \cpt\!s but fail to be conformal. For more details, see~\cite[Example 6.1]{SY}.
\end{example}

\begin{theorem*}
	Suppose $(M,F)$ is a complete Berwaldian manifold and the set of points where some of the flag curvatures vanish, is a nowhere dense set (in particular, if it is null in the Lebesgue sense in charts). If $(M,F)$ admits a nontrivial Finslerian conformal \cpt, $\bar{F} = e^{\sigma} F$, then it must be Riemannian. 
\end{theorem*}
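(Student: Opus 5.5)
We plan to follow the Riemannian blueprint (Vogel, Yano, Tashiro), but carried out in the Finslerian setting. Throughout we rely on the fact that a Berwald metric has an honest linear (Berwald $=$ Chern) connection on $M$, whose geodesics are those of $F$.

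\emph{Step 1: the PDE.} First we derive the Finslerian analogue of \eqref{eq:pde-sigma}. For a conformal change $\bar F=e^{\sigma}F$, the spray changes by
\[
\bar G^i=G^i+\sigma_0 y^i-\tfrac12 F^2 \sigma^i ,
\]
where $\sigma^i=g^{ij}(y)\sigma_j$. Imposing that $\bar F$-circles are $F$-circles, via the Frenet--Serret frame of the Cartan connection along circles of arbitrary curvature $\kappa$, should give the fibrewise identity
\[
\sigma_{i|j}-\sigma_i\sigma_j=\lambda(x,y)\,g_{ij}(x,y),
\]
with $|$ the horizontal covariant derivative. Since the manifold is Berwaldian, the left-hand side does not depend on $y$; it is a tensor $H_{ij}(x)$ on $M$.

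\emph{Step 2: the dichotomy.} At each point $x$ we have $H_{ij}(x)=\lambda(x,y)g_{ij}(x,y)$ for all $y$.
\begin{itemize}
\item If $\lambda(x,\cdot)\neq 0$ somewhere, then $g_{ij}(x,y)$ is a $y$-dependent multiple of a fixed symmetric tensor. Zero-homogeneity of $g_{ij}(x,\cdot)$ together with $g_{ij}y^iy^j=F^2$ should force $F_x$ to be Euclidean, so the Cartan tensor vanishes at $x$.
\item Otherwise $H(x)=0$.
\end{itemize}
Let $U=\{x: H(x)\neq 0\}$, which is open. By Szabó's structure theory (or directly, since parallel transport of the Berwald connection consists of linear isometries between Minkowski tangent spaces), if $F_x$ is Euclidean at one point of a connected $M$, then $F$ is Riemannian everywhere. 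So it suffices to show $U\neq\emptyset$.

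\emph{Step 3: ruling out $H\equiv 0$.} This is the main obstacle. Put $\rho=e^{-\sigma}$. On $M$ we then have $\nabla^2\rho=0$ with respect to the Berwald connection, so $d\rho$ is parallel. If $d\rho\neq 0$, the function $\rho$ is affine along every geodesic. Completeness then forces $\rho$ to take non-positive values along some geodesic, contradicting $\rho>0$, unless $\rho$ is constant along all geodesics, i.e.\ $\sigma$ is constant. A constant $\sigma$ is a homothety, i.e.\ a trivial \cpt, which is excluded by hypothesis.

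The flag curvature hypothesis is what makes this rigorous: it also rules out degenerate situations where $\lambda$ vanishes on an open set without $H\equiv 0$ globally. The tool here is the Riccati equation. Along a unit-speed geodesic $\gamma$ with $\dot\gamma=y$, write $f=\rho\circ\gamma$. Differentiating the PDE and commuting covariant derivatives yields a Ricci-type identity
\[
R^{\ i}_{j\ kl}\,\rho_i=\text{(terms in } \lambda_k g_{jl}-\lambda_l g_{jk}),
\]
so the flag curvature $K(y,\cdot)$ controls $\nabla\lambda$. Consequently, on the open dense set where no flag curvature vanishes, $\lambda$ vanishing on an open set forces $\nabla\rho=0$ there. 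Analyticity-type propagation along the ODE $f''=\lambda F^2(\dot\gamma)\,\dots$ (a linear ODE, by uniqueness) then spreads $\rho=\text{const}$ to all of $M$ by completeness, again giving a trivial \cpt. Hence $U\neq\emptyset$, the Cartan tensor vanishes at some point, and $F$ is Riemannian.

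I expect the hardest part to be making this curvature argument airtight, in particular deriving the Ricci identity for $\lambda(x,y)$ while it may still depend on $y$ before $U$ is known to be nonempty. I would first try restricting to the dense set where curvature is nonzero and running the Riccati comparison there.
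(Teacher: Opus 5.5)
Your argument is correct, and it reaches the conclusion by a genuinely different and much shorter route than the paper.

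\textbf{Common starting point.} Both proofs start from the same PDE ${}^{\sf horiz}\nabla^2\sigma - d\sigma\otimes d\sigma = f g$, namely Theorem~\ref{thrm:coord-free}, or equivalently Shen--Yang's Theorem~\ref{thrm:SY-local}. You only sketch its derivation in Step~1, so cite these instead. They also give $\lambda=f(x)$ independent of $y$. Step~2 then reduces to the one-liner $g_{ij}(x_0,y)=H_{ij}(x_0)/f(x_0)$ wherever $f(x_0)\neq 0$. Your homogeneity argument, via $\partial_{y^k}(\lambda^{-1})\,H_{ij}y^j=0$, also works.

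\textbf{The paper's route.} The paper extracts this pointwise information only at regular points of $\sigma$. It uses the adapted coordinates of Theorems~\ref{thrm:geom-1} and~\ref{thrm:structure} and the identity $\overstar{\Gamma}^1_{\alpha\beta}=(\rho''/\rho')\,g_{\alpha\beta}$. It then passes from local to global by density and continuity in $x$. For the density it needs two further ingredients:
\begin{enumerate}
\item the discreteness of the critical set (Theorem~\ref{thrm:crit-structure});
\item the formula $\K({\sf n}\wedge X)=-\rho'''/\rho'$, combined with the nowhere-density hypothesis on the vanishing set of the flag curvature.
\end{enumerate}

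\textbf{Your route.} You propagate Euclideanity from a single point by Berwald parallel transport (Ichijy\=o: parallel translations are linear isometries between the tangent Minkowski norms). You exclude $f\equiv 0$ using completeness and $\rho>0$, since then $\rho$ is affine along every complete geodesic and hence constant.

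\textbf{What each buys.} Your route uses completeness transparently and never uses the flag curvature, so it shows the flag-curvature hypothesis is superfluous for the conclusion. The paper's route, in exchange, yields local structure that your argument does not see: the warped-product form of $g^{\sf n}$, the flag curvatures with pole ${\sf n}$, and the isolatedness of critical points.

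\textbf{Drop the curvature paragraph.} The second paragraph of your Step~3 (the Ricci identity and the ``analyticity-type propagation'') is not load-bearing. It should be deleted rather than made airtight, and the ``hardest part'' you anticipate does not arise. The degenerate situation in which $\lambda$ vanishes on an open set while $H\not\equiv 0$ is harmless, because Step~2 needs only one point with $H\neq 0$. As written that paragraph would not be rigorous anyway: nothing guarantees analyticity, and $\lambda$ may well vanish on open sets.

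\textbf{Assumptions to state.} Say explicitly that $M$ is connected and $n\ge 2$. Also note that Berwald geodesics are reversible (the spray is quadratic), so forward completeness already gives geodesics defined on all of $\mathbb{R}$. This is exactly what the step ``a positive affine function on $\mathbb{R}$ is constant'' requires.
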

\section{Preliminaries}
\subsection{Some fundamentals of Finslerian structures}\label{subsec:basic}
Throughout these notes, $M$ is a smooth manifold equipped with a Finslerian fundamental function $F(x,{\y})$ ($(x,{\y}) \in T_xM$, $\y = y^i \partial_i$). The associated Finslerian metric is denoted by $g_{ij}(x,\y)$ (alternatively we also use $g^{\y}$ or $\left<\cdot,\cdot  \right>_{\y}$ or simply $g$) and the Cartan tensor is denoted by $C_{ijk}(x,\y)$.
\subsubsection{Gradients of functions and normals of regular hypersurfaces}\label{subsubsec:grad}
\par The gradient of a function $f(x)$ w.r.t. $g^{\y}$ is given by 
$
	\grad^{\y} f(x) = f^r\partial_r
$ 
in other words,
$
df(X) = g^{\y}(\grad^{\y}f, X)
$,
holds for all $X$. Thus, $\nicefrac{\grad^{\y} f}{F(\grad^y f)}$ is the unique direction along which $f$ has the steepest ascent; consequently $\nicefrac{\grad^{\y} f}{F(\grad^y f)}$ is independent of $\y$. 
\par One can select a preferred direction $\y$ (to compute the gradient w.r.t.) by setting
$
\grad f := \ell^{-1}(df);
$
where $\ell$ is Legendre transformation; see~\cite[Chapter 3]{Shen-Lectures}. In other words, $\grad f$ is the unique vector satisfying
$
df = g^{\grad f}\left(\grad f, \,\cdot\, \right)
$,
and
$
\grad^{\grad f}f = \grad f
$. It is not difficult to see that $\grad f$ is perpendicular to the regular level sets of $f$ w.r.t. the metric $g^{\grad f}$; see~\cite{Shen-Lectures}. 
\subsection{The Cartan connection}\label{subsec:cart-con}
The geodesic spray coefficients, $G^i$, and the nonlinear connection coefficients, $G^i_j$, are defined as follows:
\begin{align*}
	G^s = \nicefrac{1}{4} \; g^{sl}\left( y^k\dt{\partial}_l\partial_k\left(F^2\right) - \partial_l\left(F^2\right)\right),\quad G^i_j := \dt{\partial}_j G^i.
\end{align*}
\par The double tangent vectors 
$
	\delta_i := \partial_i - G_i^r\dt{\partial}_r,
$
and
$
\dt{\partial}_i := \partial_{y^i}
$,
span the the horizontal sub-bundle, $\mathcal{H}_{(x,\y)}$ of $TTM$ and the vertical sub-bundle, $\mathcal{V}_{(x,\y)}$ respectively. The Christoffel symbols (symmetric) for the nonlinear part of the Cartan connection are
\begin{align*}
	\overstar{\Gamma}^k_{ij} := \nicefrac{1}{2}\; g^{kl}\left( \delta_i g_{jl} + \delta_j g_{il} - \delta_l g_{ij} \right),
\end{align*}
and the vertical connection coefficients are given by the Cartan tensor (characteristic of the Cartan connection). 
\par Straightforward computation, yields
\begin{align}\label{eq:Chris-symbs}
	\overstar{\Gamma}^k_{ij} - \Gamma^k_{ij} =  g^{kl}\left(C_{ijr}G^r_l - C_{ljr}G^r_i - C_{lir}G^r_j\right),
\end{align}
where $\Gamma_{ij}^k$ are the corresponding formal Christoffel symbols (of $g^{\y}$); see~\cite[(2.4.9)]{BCS}.
\par More explicitly, the Cartan connection (defined on the vertical bundle but more conveniently expressed on the pull-back bundle $\pi^*TM$) is given by
\begin{align*}
	\nabla_{{X}} \partial_i := \left( \overstar{\Gamma}^i_{jk} dx^j + C_{ij}^k\delta y^j \right)(X)  \; \partial_k, \quad X \in T\left(TM_0\right);
\end{align*}
in particular, 
$
\nabla_{\delta_j} \partial_i = \overstar{\Gamma}^k_{ij} \partial_k
$
and
$
\nabla_{\dt{\partial}_j} \partial_i = C_{ij}^k \partial_k
$.
For more details on the Cartan connection, see e.g.~\cite[Section 2.4.3]{AIM} and~\cite[Chapter II]{AZ}.
\par An important feature of the Cartan connection, which follows from its metric compatibility, is that
\begin{align}\label{eq:cart-met-comp}
	\left< U,V\right>^{\dt{}}_{\dt{c}} = \left< \nabla_{\dt{\widetilde{c}}}U,V\right>_{\dt{c}} +  \left< U, \nabla_{\dt{\widetilde{c}}} V\right>_{\dt{c}},
\end{align}
holds for any two vector fields $U$ and $V$ along $\gamma$; see~\cite{Mat2} and \cite[2.6.3]{AIM}, c.f.~\cite{SY}.  
\par A straightforward computation using
\begin{align*}
\dt{\widetilde{c}} = \dt{x}^l\partial_l + \ddt{x}^l\dt{\partial}_l = \dt{x}^l\delta_l + (\ddt{x}^l + 2G^l)\dt{\partial}_l,
\end{align*}
yields
\begin{align}\label{eq:cov-der}
	\nabla_{\dt{\widetilde{c}}} \partial_i  = \left( G^k_i + (\ddt{x}^l + 2G^l)C_{li}^k \right)\partial_k, \quad \text{and} \quad \nabla_{\dt{\widetilde{c}}} \dt{c} = (\ddt{x}^k + 2G^k)\partial_k.
\end{align}
Note that $\nabla_{\dt{\widetilde{c}}} \dt{c} = 0$ is the geodesic equation. 
\subsubsection{Berwaldian manifolds}
We say a Finslerian structure is Berwaldian when the Christoffel symbols $\overstar{\Gamma}^k_{ij}$ are independent of $\y$ alternatively if the Chern connection is the Levi-Civita connection of some Riemannian metric on $M$. 
\par Berwaldian manifolds are slightly more general than Riemannian structures; e.g. their geodesics coincide with the geodesics of a Riemannian metric. Though, they still comprise a large and interesting family of spaces. For more details, see~\cite[Chapter 10]{BCS}.
\subsubsection{Hessian}\label{subsubsec:hess}
\par The Finslerian Hessian of a smooth function $f$ is defined by
\begin{align*}
\Hess_F f (\y) := \nicefrac{d^2}{ds^2}\,\restr_{s=0} \left(f\circ {\eta_{\y}}\right),
\end{align*}
where $\eta_{\y}(s)$ is the geodesic with initial velocity $\y$. In local coordinates, 
\begin{align*}
\Hess_F(f)(\y) = y^iy^j f_{ij} - 2f_iG^i(\y);
\end{align*}
see~\cite[Chapter 14]{Shen-Lectures}. 
\par We will also encounter the horizontal Hessian, ${^{\sf horiz}\nabla}^2$, on \emph{scalar functions}, defined as follows:
\begin{align*}
{^{\sf horiz}\nabla}^2 f (X,Y) = XYf - \nabla_{\nabla_{X^h}Y}f = XY f - \nabla_{\left(\nabla_{X^h}Y\right)^h}f.
\end{align*}
\subsection{Curvature}
\phantom{}
\par \dunderline{1.2pt}{\small \textit{\textbf{\textsf{Notation:}}}} 
\begin{itemize}
	\item \textsl{$TM_0$ denotes the slit tangent bundle.}
	\smallskip
	\item \textsl{A vector field on $TM_0$ that projects to the vector field $X$ on $M$ under $\pi_*$ is denoted by $\widetilde{X}$.}
\end{itemize}
\par Three curvature tensors (forms) are assigned to $M$, given by
\begin{align*}
	\Riem^{\y}(X,Y) := \Omega\left(h\widetilde{X}, h\widetilde{Y}  \right), \quad
	P^{\y}(X, \dt{Y}) := \Omega\left(h\widetilde{X}, v\widetilde{Y}  \right), \quad
	Q^{\y}(\dt{X}, \dt{Y}) := \Omega(v\widetilde{X}, v\widetilde{Y}),
\end{align*}
where $X$ and $Y$ are vector fields and where $\Omega$ is the usual curvature form of the bundle $TTM_0$, i.e.,
\begin{align*}
\Omega(A,B) = [\nabla_{A},\nabla_B] - \nabla_{[A,B]}.
\end{align*}
Here, $\dt{X} := \mu(vX)$, where $\mu$ is given by 
$
\mu\left({\hat{X}}\right) = \nabla_{\hat{X}} (\y)
$.
Note that $\y$ is a canonical section of the pullback bundle $\pi^*(TM)$ (i.e., the pullback of the Liouville vector field); for details see~\cite{AZ}.
\par $\Riem^{\y}$ (also known as the $hh$-curvature) is the type of curvature tensor that we are concerned with in these notes.
\par A flag is a product $v\wedge w$ ($v,w$ are independent tangent vectors), with  $v$ referred to as the flag pole. The flag curvature $\K(v\wedge w)$ is defined as
\begin{align}\label{eq:flag}
	\K(v\wedge w) := \frac{\left< \Riem^{v}(v,w)w,v \right>_{v}}{\|v\|^2_{v}\|w\|^2_{w} - \left<v,w  \right>^2_v}.
\end{align}
For a $\y$-dependent orthonormal frame $\left\{e_i\right\}$, $\K(e_i\wedge e_j)$ are eigenvalues of the negative curvature operator $-\Riem$ acting on $\bigwedge^2(\mathcal{V}M)$.   
\subsection{Geodesic circles}\label{subsec:geod-circ}
\hfill
\par Using the Cartan connection, a geodesic circle in a Finslerian manifold is a smooth curve with constant first geodesic curvature and vanishing second geodesic curvature; for more details on curve theory in Finslerian manifolds, see~\cite[Chapters 2 and 4]{BF}. 
\par Let us list the well-known facts about geodesic circles in Finslerian manifolds that follow directly from standard ODE theory (e.g. see~\cite{SY} for details). For the definition of higher derivatives of $c$ see the next section.  
\begin{itemize}
	\item The curve $c(s)$ is a geodesic circle if and only if it satisfies the ODE,
	\begin{align*}
	c'''(s) + \|c''(s)\|^2c'(s) = 0, \quad \text{or equivalently,} \quad c'''(s) + \kappa_1^2 c'(s) = 0.
	\end{align*}
	\smallskip
	\item There is a unique geodesic circle parameterized by arc-length with the initial conditions $c(0) = p$, $c'(0) = u$, $c''(0) = v$ when $g_{u}(u,v) = 0$ and $\|u\| = 1$; in this case, $\kappa = \|c''(s)\| = \|v\|$.
\smallskip
	\item The curve $c(t)$ is a geodesic circle if and only if 
	\begin{align*}
	\dddt{c} - 3v^{-2}g_{\dt{c}}(\dt{c}, \ddt{c})\ddt{c} \equiv 0 \mod \dt{c}.
	\end{align*}
\end{itemize}
\subsection{Conformal Change of a Finslerian structure}\label{subsec:conf}
\par Consider a conformal change of the fundamental function,
$
\widebar{F}(x,{\y}) = e^{\sigma(x,{\y})}F(x,{\y})
$.
It is a classical result that $\sigma$ must be independent of $y$~\cite{Knebel}, and clearly  $\widebar{F} = e^{\sigma(x)}F$ is equivalent to $\widebar{g}^{\y} = e^{2\sigma(x)}g^{\y}$; see~\cite{Hach}.
\par Under the conformal change, the following holds:
\begin{align*}
	\widebar{y}_i = e^{2\sigma}y_i, \quad \widebar{C}_{ijk} = e^{2\sigma} C_{ijk}, \quad \text{and},\quad  \widebar{C}^{i}_{jk} = C^{i}_{jk}.
\end{align*}
Regarding the geodesic spray coefficients, one has
$
\widebar{G}^i = G^i - B^i
$
and
$
\widebar{G}^i_j = G^i_j - B^i_j,
$
where
\begin{align*}
	B^i = B^{ir}\sigma_r, \quad \text{where} \quad B^{ir} = 	\left( \nicefrac{F^2}{2} \right) (g^{ir} - 2\nicefrac{y^iy^r}{F^2} ) = (\nicefrac{F^2}{2})\;g^{ir} - y^ry^i.
\end{align*}
More explicitly,
\begin{align}\label{eq:con-dif-1}
	\widebar{G}^i - G^i = \left(y^ry^i - (\nicefrac{F^2}{2})\;g^{ir}\right)\sigma_r = y^i g(\grad^{\y} \sigma, \y) - (\nicefrac{F^2}{2})\sigma^i,
\end{align}
and nonlinear connection coefficients are related via
\begin{align}\label{eq:con-dif-2}
	\widebar{G}^i_j - G^i_j = \left(-y_jg^{ir} + F^2C^{ir}_j + \delta_j^ry^i + y^r\delta^i_j    \right)\sigma_r;
\end{align}
see~\cite{Hach} for details.
\section{The coordinate-free characterization}
\par The characterization of \cpt\!s in local coordinates is generalized to the Finslerian setting as follows.
\begin{theorem}[Local characterization, Shen-Yang~ {\cite{SY}}]\label{thrm:SY-local}
	A conformal transformation $\widebar{F} = e^{\sigma} F$ is a \cpt if and only if there exists a scalar function $\lambda(x)$ such that
	\begin{align}\label{eq:local-char}
		\rho_{_{i|j}} = \left(e^{-\sigma}\right)_{i|j} = \lambda g_{ij}, 
	\end{align}
	here, $_{|}$ denotes horizontal covariant derivative with respect to the Cartan or Chern connection.
\end{theorem}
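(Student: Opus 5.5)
The plan is to compare the geodesic circle equations of $F$ and $\bar F=e^{\sigma}F$ directly, using the transformation formulas \eqref{eq:con-dif-1}--\eqref{eq:con-dif-2} for the spray and the nonlinear connection. We work with the parameter-free characterization from \secref{subsec:geod-circ}: $c$ is a geodesic circle if and only if
\begin{align*}
\dddt{c} - 3v^{-2}g_{\dt{c}}(\dt{c},\ddt{c})\,\ddt{c}\equiv 0 \mod \dt{c},
\end{align*}
where the higher derivatives are covariant derivatives along the lift $\widetilde c$ computed via \eqref{eq:cov-der}.

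\textbf{Step 1 (bar derivatives in terms of unbarred ones).} For an arbitrary curve $c$, write $\bar{\dt c}=\dt c$ and
\begin{align*}
\bar{\ddt c}=\ddt c-2B(\dt c), \qquad B^i(\dt c)=v^2\,\grad^{\dt c}\sigma\ \text{-type terms}\ -\ \dt c^{\,i}\, g(\grad^{\dt c}\sigma,\dt c).
\end{align*}
The last term is proportional to $\dt c$, so it drops modulo $\dt c$. Differentiate once more with $\bar\nabla$, expressing $\bar\nabla_{\dt{\widetilde c}}\partial_i-\nabla_{\dt{\widetilde c}}\partial_i$ through \eqref{eq:con-dif-2}. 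Since $\bar C^i_{jk}=C^i_{jk}$ and $\bar G^l+\ddt x^l$ differs from the unbarred quantity only by $B^l$, the Cartan-tensor contributions should organize into terms contracted with $\y$, which vanish by homogeneity ($C_{ijk}y^k=0$).

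\textbf{Step 2 (reduce to the Hessian condition).} We expect the difference of the two circle expressions to reduce, modulo $\dt c$ and modulo terms that cancel against $-3v^{-2}\bar g(\dot c,\ddot c)\ddot c$, to
\begin{align*}
v^{2}\Big(\sigma_{i|j}-\sigma_i\sigma_j-\tfrac12\,\|\grad\sigma\|^2_{\dt c}\,g_{ij}\Big)\dt c^{\,j}\,g^{ik}\partial_k.
\end{align*}
More precisely, we expect the combination $e^{\sigma}\rho_{i|j}\dt c^{\,j}$ to appear, using $\rho=e^{-\sigma}$, $\rho_i=-\rho\sigma_i$ and $\rho_{i|j}=\rho(\sigma_i\sigma_j-\sigma_{i|j})$. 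The horizontal covariant derivative arises because the derivative of $\sigma_r(c(t))$ along $c$, combined with the connection coefficients, is exactly $\sigma_{r|j}\dt c^{\,j}$ for the Chern/Cartan connection; the $C$-terms again drop by homogeneity.

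\textbf{Step 3 (both directions).} The conformal map sends $F$-circles to $\bar F$-circles exactly when this expression is $\equiv 0 \bmod \dt c$ for every $c$. Take every initial data $(x,\dt c,\ddt c)$, which is allowed by the ODE existence statement in \secref{subsec:geod-circ}. Then the condition becomes: $\rho_{i|j}(x,\y)y^j=\mu(x,\y)\,y_i$ for all $\y$. Differentiating vertically and using symmetry of $\rho_{i|j}$ gives $\rho_{i|j}=\lambda g_{ij}$, with $\lambda=\rho_{i|j}y^iy^j/F^2$. That $\lambda$ depends only on $x$ comes from comparing with $\rho_{i|j}=\lambda g_{ij}$: contracting with $g^{ij}$ and using $\dt\partial_k\rho_{i|j}$, where the vertical derivative of $\rho_{i|j}$ is expressed via $P$-curvature/Cartan terms, should give $\dt\partial_k\lambda=0$. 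The converse follows by reading Step 2 backwards: if $\rho_{i|j}=\lambda g_{ij}$, the residual term is parallel to $\dt c$ (and to $y_i$), so circles map to circles.

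\textbf{Main obstacle.} The hardest part is the bookkeeping in Step 1, namely showing that all Cartan-tensor and $\y$-dependent terms either vanish by homogeneity or are multiples of $\dt c$. Showing that $\lambda$ is independent of $\y$ in Step 3 is the subtle point. For the latter I would first try differentiating the identity $\rho_{i|j}y^j=\lambda y_i$ vertically and using the Ricci-type identity relating the vertical derivative of $\rho_{i|j}$ to $\rho_r P^r_{ij k}$, contracted with $y$.
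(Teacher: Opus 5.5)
The paper does not prove this statement; it is quoted from Shen--Yang. The nearest argument in the paper is Lemma~\ref{lem:key-1} together with Theorem~\ref{thrm:path-wise}, and those begin exactly like your Step~1. That step is fine: the Cartan-tensor terms in $\widebar{\nabla}_{\widetilde{c}'}-\nabla_{\widetilde{c}'}$ do cancel, cf.\ \eqref{eq:difference-con-coef}.

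\textbf{The gap is in Step~2.} The residual contains $\nabla_{\widetilde{c}'}\big(\grad^{c'}\sigma\big)$. Since $\grad^{y}\sigma=g^{ij}(x,y)\sigma_j\partial_i$ depends on $y$, its Cartan derivative along the lift has a vertical part in the direction of the acceleration. Differentiating $\langle \grad^{c'}\sigma,\partial_i\rangle_{c'}=\sigma_i$ with \eqref{eq:cart-met-comp} and \eqref{eq:cov-der} gives
\begin{align*}
\big\langle \nabla_{\widetilde{c}'}\grad^{c'}\sigma,\partial_i\big\rangle_{c'}=\sigma_{i|j}\,\dt{x}^j-\sigma_k\,C^k_{il}\,(c'')^l .
\end{align*}
Here the Cartan tensor is contracted with $c''$, not with $y$, so homogeneity does not kill it. The same term appears if you differentiate $d\sigma$ instead. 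Write $y=c'$, $w=c''$, with $F(y)=1$ and $g_y(y,w)=0$. The correct criterion for an $F$-circle to be an $\widebar{F}$-circle is then
\begin{align*}
\rho_{i|j}(x,y)\,y^j-\rho_k\,C^k_{il}(x,y)\,w^l\equiv 0 \mod{y_i}.
\end{align*}
This is why the paper keeps the full $\nabla_{\widetilde{c}'}$ in \eqref{eq:main-3}, and needs Theorem~\ref{thrm:geom-1}(\ref{path-char:item4}) to dispose of the vertical part.

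\textbf{Forward direction.} Dropping the $w$-term leaves only $\rho_{i|j}y^j=\mu\,y_i$. Since $\rho_{i|j}=\partial_j\rho_i-\rho_k\overstar{\Gamma}^k_{ij}$, vertical differentiation gives
\begin{align*}
\rho_{i|l}=\mu\,g_{il}+(\dt{\partial}_l\mu)\,y_i+\rho_k\big(\dt{\partial}_lG^k_i-\overstar{\Gamma}^k_{il}\big).
\end{align*}
The last term equals $\rho_kC^k_{il|m}y^m$ (a Landsberg term) and is symmetric in $i,l$. So symmetry of $\rho_{i|l}$ only forces $\dt{\partial}_l\mu\parallel y_l$, and $0$-homogeneity of $\mu$ then gives $\dt{\partial}_l\mu=0$. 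The point you flagged as subtle is therefore automatic. The real obstruction is the Landsberg term, which geodesic data cannot remove.

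The missing idea is to use circles with arbitrary $w\perp y$. Linearity in $w$ forces $\rho_kC^k_{ij}=0$; this is the counterpart of $C_{1ij}=0$ in Theorem~\ref{thrm:geom-1}(\ref{path-char:item5-(a)}). Differentiate this identity horizontally along $y$, using $\rho_{k|m}y^m=\mu\,y_k$ and $y_kC^k_{ij}=0$. This yields $\rho_kC^k_{ij|m}y^m=0$, hence $\rho_{i|j}=\mu(x)\,g_{ij}$.

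\textbf{Converse.} Starting from $\rho_{i|j}=\lambda g_{ij}$, the criterion still demands $\rho_kC^k_{ij}=0$, and this does not follow. Take a non-Euclidean Minkowski norm on $\{x^2>0\}\subset\R^2$ and $\rho=x^2$. Then $\overstar{\Gamma}\equiv 0$, so $\rho_{i|j}\equiv 0$. Yet for $w=\kappa N$, with $N$ the $g_y$-unit normal, the $N$-component of the left side is $-\kappa\,d\rho(N)\,C_y(N,N,N)$, which is nonzero for an open set of directions. So ``reading Step~2 backwards'' cannot work. With Cartan-connection circles as in \secref{subsec:geod-circ}, the ``if'' direction needs the additional condition $\rho_kC^k_{ij}=0$.
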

\par Our first result is the derivation of the coordinate-free (Riccati-type) PDE  that characterizes a conformal \cpt serving as an analogue to the Riemannian equation \eqref{eq:pde-sigma}. 
\par \dunderline{1.2pt}{\small \textit{\textbf{\textsf{Notation:}}}}
\begin{itemize}
	\item \textsl{$c(s)$ is a unit-speed curve (w.r.t $F$) and $\left<\,\cdot\,, \,\cdot\, \right>_{c'}$ denotes the metric $g^{c'}$.}
	\smallskip
	\item We denote the natural lift of a curve $c(t)$ to the tangent bundle by $\widetilde{c}(t) := \left(c(t), \dt{c}(t) \right)$.
	\smallskip
	\item \textsl{ $UM$ refers to the indicatrix bundle, i.e., the set of all vectors $X$ with $F(X) = 1$.}
	\smallskip
	\item $\kappa$ and $\widebar{\kappa}$ are the geodesic curvatures computed w.r.t. $F$ and  $\widebar{F} = e^{\sigma} F$ respectively. 
	\smallskip
	\item  The higher derivatives of $c$ are defined as $c^{(k)}(t) := \nabla^{k-1}_{\dt{\widetilde{c}}} \dt{c}$. For the covariant derivatives of $c$ w.r.t. a general parameter and the arc-length parameter (w.r.t. $g_{\dt{c}}$), we will adopt the standard notation $^{\dt{}}:= \nabla_{\dt{\widetilde{c}}(t)}$ and $':= \nabla_{{\widetilde{c}}'(s)}$ respectively. However, for better readability, all derivatives of the coordinate functions will always be dotted.
	\smallskip
	\item  For the derivative (still using the Cartan connection of $g$) w.r.t. the arc-length parameter $\widebar{s}$ of a conformal change $\widebar{g}_{\dt{c}} = e^{2\sigma}g_{\dt{c}}$, we adopt the notation $^{\ast}:= \nabla_{\widetilde{c}^\ast}$.
\end{itemize} 
\begin{lemma}\label{lem:key-1}
	The following identities hold along $c(s)$.
	\begin{enumerate}
		\item \label{lemma:item1}
		\makebox[\linewidth]{\(\begin{aligned}[t]
				\left(\widebar{\nabla}_{\widetilde{c}'} - \nabla_{\widetilde{c}'}\right) c' = - \grad^{c'}\sigma + 2\sigma'c';
			\end{aligned}\)}
		\smallskip
		\item\label{lemma:item2} 
		\makebox[\linewidth]{\(\begin{aligned}[t]
				\left( \widebar{\nabla}_{\widetilde{c}'} - \nabla_{\widetilde{c}'} \right)	\nabla_{\widetilde{c}'}c' = \left<\grad^{c'} \sigma, c''\right>c' + \sigma'c'';
			\end{aligned}\)}
		\smallskip
		\item \label{lemma:item3}
		\makebox[\linewidth]{\(\begin{aligned}[t]
				\left(\widebar{\nabla}_{\widetilde{c}'} - {\nabla}_{\widetilde{c}'} \right)\grad^{c'} \sigma = \|\grad^{c'}\sigma\|^2;
			\end{aligned}\)}
		\smallskip
		\item \label{lemma:item4}
		\makebox[\linewidth]{\(\begin{aligned}[t]
				\left<\widebar{\nabla}_{\widetilde{c}'}c', c'\right> = \sigma'.
			\end{aligned}\)}	
	\end{enumerate}
\end{lemma}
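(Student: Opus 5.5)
The plan is to reduce all four identities to a single coordinate formula for the difference of the two Cartan covariant derivatives along the lift $\widetilde{c}$, and then feed in \eqref{eq:con-dif-1} and \eqref{eq:con-dif-2}. For a vector field $V=V^i\partial_i$ along $c$, formula \eqref{eq:cov-der} gives
\begin{align*}
\nabla_{\widetilde{c}'}V = \left(\dt{V}^k + V^i G^k_i + V^i(\ddt{x}^l+2G^l)C^k_{li}\right)\partial_k ,
\end{align*}
and the same expression with bars for $\widebar{\nabla}$. The curve, its lift and the components $V^i$ do not change under the conformal change. Moreover $\widebar{C}^k_{li}=C^k_{li}$ by \secref{subsec:conf}. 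Subtracting, the terms $\dt{V}^k$ and $\ddt{x}^lC^k_{li}V^i$ cancel, and we are left with the master formula
\begin{align*}
\left(\widebar{\nabla}_{\widetilde{c}'}-\nabla_{\widetilde{c}'}\right)V = V^i\Big[(\widebar{G}^k_i-G^k_i) + 2(\widebar{G}^l-G^l)C^k_{li}\Big]\partial_k ,
\end{align*}
where everything is evaluated at $\y=c'$ with $F(c')=1$. Note that $\sigma_r y^r=\sigma'$ and $\sigma^k=(\grad^{c'}\sigma)^k$.

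For item \eqref{lemma:item1}, it is cleanest to work directly from the second formula in \eqref{eq:cov-der}. The difference of the two sides is $2(\widebar{G}^k-G^k)\partial_k$. By \eqref{eq:con-dif-1} with $F=1$ this equals $2\sigma' c' - \grad^{c'}\sigma$. Equivalently, one can use the master formula with $V=c'$, together with Euler homogeneity $y^iG^k_i=2G^k$ and $y^iC^k_{li}=0$.

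For items \eqref{lemma:item2} and \eqref{lemma:item3} I will insert \eqref{eq:con-dif-2} and \eqref{eq:con-dif-1} into the master formula. The term $2(\widebar{G}^l-G^l)C^k_{li}V^i$ then reduces, since $y^lC^k_{li}=0$, to $-\sigma^lC^k_{li}V^i$. This exactly cancels the term $F^2C^{kr}_iV^i\sigma_r$ coming from \eqref{eq:con-dif-2}, using the symmetry of the Cartan tensor. The remaining terms are
\begin{align*}
-\left<c',V\right>\grad^{c'}\sigma + \left<\grad^{c'}\sigma,V\right>c' + \sigma' V .
\end{align*}
For $V=c''$ we have $\left<c',c''\right>_{c'}=0$. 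This follows from \eqref{eq:cart-met-comp} applied to $\left<c',c'\right>_{c'}\equiv 1$. Hence item \eqref{lemma:item2} follows. For $V=\grad^{c'}\sigma$ we have $\left<c',V\right>=\sigma'$, so the first and third terms cancel. What remains is $\|\grad^{c'}\sigma\|^2c'$, which is item \eqref{lemma:item3} (the factor $c'$ is implicit in the statement).

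Item \eqref{lemma:item4} follows by pairing item \eqref{lemma:item1} with $c'$ in $g^{c'}$. We use $\left<\nabla_{\widetilde{c}'}c',c'\right>=0$ and $\left<\grad^{c'}\sigma,c'\right>=\sigma'$, which gives $-\sigma'+2\sigma'=\sigma'$.

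The main obstacle is the bookkeeping of the Cartan-tensor terms. We must check that the contribution $F^2C^{ir}_j\sigma_r$ in \eqref{eq:con-dif-2} cancels precisely against the $2(\widebar{G}^l-G^l)C^k_{li}$ term. This requires the index conventions of \eqref{eq:con-dif-2} to match those of \eqref{eq:cov-der}, so I would verify the lowered/raised positions carefully at that step.
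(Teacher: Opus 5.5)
Your proposal is correct and is essentially the paper's own proof: your master formula is exactly the paper's identity \eqref{eq:difference-con-coef}, obtained from \eqref{eq:cov-der} and $\widebar{C}^k_{li}=C^k_{li}$. The paper then likewise substitutes \eqref{eq:con-dif-1}--\eqref{eq:con-dif-2}, cancels the Cartan-tensor terms (your index check works out, since $2C^k_{il}(\widebar{G}^l-G^l)=-F^2C^k_{il}\sigma^l$ exactly offsets $F^2C^{kr}_i\sigma_r$), specializes to $V=c'$, $c''$, $\grad^{c'}\sigma$, and pairs item (1) with $c'$; the only cosmetic difference is in item (3), where the paper detours through vertical metricity of $g^{-1}$ whereas you use tensoriality of the difference directly, which suffices (and you are right that the result carries the factor $c'$).
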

\begin{proof}[\footnotesize \textbf{Proof}]
	\hfill
	\begin{enumerate}
		\item [(\ref{lemma:item1}).] Using \eqref{eq:cov-der}, one has
		\begin{align*}
			\left(\widebar{\nabla}_{\widetilde{c}'} - \nabla_{\widetilde{c}'}\right) c' &= 2 \left(\widebar{G}^k - G^k \right)\partial_k\\
			&= - 2B^k\partial_k \\
			&= \left(- g^{kr}  + 2 \dt{x}^r\dt{x}^k  \right)\sigma_r\partial_k\\
			&= -\grad^{c'} \sigma + 2\left<c', \grad^{c'} \sigma\right>c'.
		\end{align*}
		\smallskip
		\item [(\ref{lemma:item2}).] 
		From \eqref{eq:cov-der} and in combination with \eqref{eq:con-dif-1} and \eqref{eq:con-dif-2}, we have
		\begin{align}\label{eq:difference-con-coef}
			\left( \widebar{\nabla}_{\widetilde{c}'} - \nabla_{\widetilde{c}'} \right) \partial_j &= \left(\widebar{G}^i_j - G^i_j + 2C^{i}_{jl} (\widebar{G}^l - G^l)\right)\partial_i \notag\\
			&=\left(-y_jg^{ir} + \delta_j^ry^i + y^r\delta^i_j    \right)\sigma_r\partial_i \notag\\
			&= - y_j\sigma^i\partial_i + (\sigma_j)y^i\partial_i +  \sigma_ry^r\partial_j,
		\end{align}
		in which $y^i = \dt{x}^i$; thus,
		\begin{align*}
			\left( \widebar{\nabla}_{\widetilde{c}'} - \nabla_{\widetilde{c}'} \right) \partial_j = \left<\grad^{c'}\sigma,\partial_j\right>c'- \left<c', \partial_j\right>\grad^{c'} \sigma + \left<\grad^{c'} \sigma, c'\right>\partial_j.
		\end{align*}
		Therefore, by tensoriality of the difference of covariant derivatives, we deduce that
		\begin{align*}
			\left( \widebar{\nabla}_{\widetilde{c}'} - \nabla_{\widetilde{c}'} \right)	\nabla_{\widetilde{c}'}c' 
			&= - \left<\nabla_{\widetilde{c}'}c', c'\right>\grad^{c'} \sigma + \left<\grad^{c'} \sigma, \nabla_{\widetilde{c}'}c'\right>c' + \left<\grad^{c'}\sigma, c'\right>\nabla_{\widetilde{c}'}c'\\
			&= \left<\grad^{c'} \sigma, \nabla_{\widetilde{c}'}c'\right>c' + \left<\grad^{c'}\sigma, c'\right>\nabla_{\widetilde{c}'}c';
		\end{align*}
		and in particular,
		\begin{align*}
			\left( \widebar{\nabla}_{\widetilde{c}'} - \nabla_{\widetilde{c}'} \right)	\nabla_{\widetilde{c}'}c'  \equiv  \left<\grad^{c'}\sigma, c'\right>\nabla_{\widetilde{c}'}c' \mod c'.
		\end{align*}
		Here, we have used $\left<\nabla_{\widetilde{c}'}c', c'\right> = 0$ which follows from the metric compatibility \eqref{eq:cart-met-comp}.
		\smallskip
		\item [(\ref{lemma:item3}).] 
		We first note that by metric compatibility, we have $^{\sf vert}\nabla g = 0$. Since $^{\sf vert}\nabla$ is a linear connection, we deduce that $^{\sf vert}\nabla g^{-1} = 0$ as well. Applying this to $\bar{g}$ we get
		\begin{align*}
		 e^{-2\sigma}\; {^{\sf vert}\widebar{\nabla}} \left( g^{-1}  \right)	 = \widebar{\nabla} \left( e^{-2\sigma} g^{-1}  \right) = \widebar{\nabla} \left( \bar{g}^{-1}  \right) = 0;
		\end{align*}
		therefore, ${^{\sf vert}\widebar{\nabla}} \left( g^{-1}  \right) = 0$. In particular,
		\begin{align*}
		(\widebar{\nabla}_{\widetilde{c}'} - \nabla_{\widetilde{c}'}) \sigma^i &=  (\widebar{\nabla}_{\widetilde{c}'} - \nabla_{\widetilde{c}'}) (g^{ir}\sigma_r)\\
		&= 0.
		\end{align*}		
		Now using \eqref{eq:difference-con-coef}, we obtain (with $y^i = \dt{x}^i$)
		\begin{align*}
			(\widebar{\nabla}_{\widetilde{c}'} - \nabla_{\widetilde{c}'})\grad^{c'} \sigma &= \sigma^j (\widebar{\nabla}_{\widetilde{c}'} - \nabla_{\widetilde{c}'})\partial_j\\
			 &= \sigma^j( - y_j\sigma^i\partial_i + (\sigma_j)y^i\partial_i +  \sigma_ry^r\partial_j) \\
			&= \|\grad^{c'} \sigma\|^2c' \equiv 0 \quad \mod c'.
		\end{align*}
		\smallskip
		\item [(\ref{lemma:item4}).]
		\begin{align*}
			\left<\widebar{\nabla}_{\widetilde{c}'}c', c'\right> &= \left<\nabla_{\widetilde{c}'}c' -\grad^{c'} \sigma + 2\left<c', \grad^{c'} \sigma\right>c' ,c'\right> \\
			&= \left<\nabla_{\widetilde{c}'}c', c'\right> + \left<\grad^{c'} \sigma, c'\right> \\
			&= \left<\grad^{c'} \sigma, c'\right> .
		\end{align*}
	\end{enumerate}
\end{proof}
\begin{theorem}[Circle-wise characterization]\label{thrm:path-wise}
	$\bar{F} = e^{\sigma} F$ is a local regular \cpt if and only if $\sigma$ satisfies 
	\begin{align}\label{eq:main-3}
		\nabla_{\widetilde{c}'} (\grad^{c'} \sigma) - g(c', \grad^{c'}\sigma)\grad^{c'}\sigma = f(x,c')c',
	\end{align}
	with
	\begin{align}\label{eq:f-form}
	 f(x,c') = \sigma'' + \left< c'', \grad^{c'}\sigma \right> - \|\grad^{c'} \sigma\|^2 + e^{2\sigma}\widebar{\kappa}^2 -  \kappa^2.
	 \end{align}
	 Alternatively,
	 \begin{align*}
	 	f(x,c') = \sigma'' - \left< c'', \grad^{c'}\sigma \right> - \left(\sigma'\right)^2.
	 \end{align*}
\end{theorem}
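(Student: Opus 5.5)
The plan is to compare the Frenet data of a single curve $c$ with respect to $F$ and with respect to $\bar F=e^{\sigma}F$. Everything is done with the Cartan connection of $g$, using Lemma~\ref{lem:key-1} to convert $\widebar{\nabla}$ into $\nabla$.

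First fix a unit-speed $c(s)$ for $F$. Its $\bar F$-arc-length satisfies $d\bar s/ds=e^{\sigma}$, so $c^\ast=e^{-\sigma}c'$. I will compute $\bar c^{\ast\ast}:=\widebar{\nabla}_{\widetilde c^\ast}c^\ast$. By Lemma~\ref{lem:key-1}\eqref{lemma:item1},
\begin{align*}
\bar c^{\ast\ast}=e^{-2\sigma}\bigl(c''-\grad^{c'}\sigma+\sigma'c'\bigr).
\end{align*}
Differentiating once more with $\widebar{\nabla}_{\widetilde c^\ast}=e^{-\sigma}\widebar{\nabla}_{\widetilde c'}$ gives $\bar c^{\ast\ast\ast}$. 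In this step, $\widebar{\nabla}_{\widetilde c'}c'$, $\widebar{\nabla}_{\widetilde c'}c''$ and $\widebar{\nabla}_{\widetilde c'}\grad^{c'}\sigma$ are replaced using items \eqref{lemma:item1}, \eqref{lemma:item2} and \eqref{lemma:item3} of Lemma~\ref{lem:key-1}. Item \eqref{lemma:item4} and $\langle c'',c'\rangle=0$ are used to simplify the tangential components.

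The result should be an identity of the form
\begin{align*}
e^{3\sigma}\bigl(\bar c^{\ast\ast\ast}+\bar\kappa^2\bar c^{\ast}\bigr)=c'''+\kappa^2c'-\Bigl(\nabla_{\widetilde c'}\grad^{c'}\sigma-\sigma'\grad^{c'}\sigma\Bigr)+f\,c'+(\text{terms in }c''),
\end{align*}
where $\bar\kappa^2=\bar g(\bar c^{\ast\ast},\bar c^{\ast\ast})$ is expanded through $e^{2\sigma}\bar\kappa^2=\|c''-\grad^{c'}\sigma+\sigma' c'\|^2$. I expect the $c''$-terms to cancel, since $-2\sigma' c''$ from the chain rule should balance $\sigma' c''$ from item \eqref{lemma:item2} and the $\sigma'c''$ coming from $(\sigma' c')'$. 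Collecting the coefficient of $c'$ should then give exactly \eqref{eq:f-form}.

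For the two directions of the equivalence, I use the ODE characterization from \secref{subsec:geod-circ}: $c$ is a $\bar F$-geodesic circle iff $\bar c^{\ast\ast\ast}+\bar\kappa^2\bar c^\ast=0$, and likewise for $F$.

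\emph{Sufficiency.} If \eqref{eq:main-3} holds, the identity shows that $F$-circles satisfy the $\bar F$-circle equation.

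\emph{Necessity.} Suppose $\bar F$ is a \cpt. Through every point and every unit $c'$ there are $F$-circles with arbitrary prescribed $c''\perp c'$, including geodesics ($c''=0$). Applying the identity to these circles forces the normal part of $\nabla_{\widetilde c'}\grad^{c'}\sigma-\sigma'\grad^{c'}\sigma$ to vanish. This gives \eqref{eq:main-3} with $f$ equal to its tangential component.

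To get the alternative form of $f$, I pair \eqref{eq:main-3} with $c'$ and use \eqref{eq:cart-met-comp}:
\begin{align*}
\langle\nabla_{\widetilde c'}\grad^{c'}\sigma,c'\rangle=(\sigma')'-\langle\grad^{c'}\sigma,c''\rangle=\sigma''-\langle c'',\grad^{c'}\sigma\rangle,
\end{align*}
which yields $f=\sigma''-\langle c'',\grad^{c'}\sigma\rangle-(\sigma')^2$.

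The main obstacle is the bookkeeping in the third derivative. The pieces $\widebar{\nabla}_{\widetilde c'}$ of $\sigma'c'$ and of $e^{-2\sigma}$ produce several $\sigma'$-multiples that must cancel exactly. Also, item \eqref{lemma:item3} is stated only modulo $c'$, so its tangential part has to be tracked carefully, because it feeds into $f$. Throughout I will work modulo $c'$ first and recover $f$ only at the end via the pairing above, which sidesteps most of that tracking.
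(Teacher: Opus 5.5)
Your proposal is correct and is essentially the paper's argument. Both proofs rest on Lemma~\ref{lem:key-1} to trade $\widebar{\nabla}$ for $\nabla$ along $c$. Carrying out your computation does give the exact identity $e^{3\sigma}(\bar c^{\ast\ast\ast}+\bar\kappa^2\bar c^{\ast})=c'''+\kappa^2c'-(\nabla_{\widetilde{c}'}\grad^{c'}\sigma-\sigma'\grad^{c'}\sigma)+f\,c'$, with the $c''$-terms cancelling as you predicted and $f$ exactly as in \eqref{eq:f-form}.

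The only difference is bookkeeping. You reparametrize by $\bar F$-arc-length before differentiating and use the arc-length circle ODE in both directions. The paper instead differentiates in $F$-arc-length, then invokes the general-parameter tangency criterion and the change-of-parameter formula, and leaves the sufficiency direction implicit.
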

\begin{proof}[\footnotesize \textbf{Proof}]
\par Let $c(s)$ be a geodesic circle with arc-length parameter (w.r.t. $g$). From Lemmas~\ref{lem:key-1}, we deduce that
	\begin{align}\label{eq:main-1}
		\widebar{\nabla}_{\widetilde{c}'}\widebar{\nabla}_{\widetilde{c}'} c' - \nabla_{\widetilde{c}'}\nabla_{\widetilde{c}'} c' &= \widebar{\nabla}_{\widetilde{c}'} 	\left(\widebar{\nabla}_{\widetilde{c}'} - \nabla_{\widetilde{c}'}\right) c' + \left( \widebar{\nabla}_{\widetilde{c}'} - \nabla_{\widetilde{c}'} \right)	\nabla_{\widetilde{c}'} c' \notag \\
		&= \widebar{\nabla}_{\widetilde{c}'}(- \grad^{c'}\sigma + 2\sigma'c') + \left<\grad^{c'}\sigma, c''\right>c' + \sigma'c'' \notag\\
		&=  - {\nabla}_{{\widetilde{c}}'} (\grad^{c'} \sigma) - \|\grad^{c'}\sigma\|^2c'+ 2\sigma'\left( c'' - \grad^{c'}\sigma + 2\sigma'c' \right) + 2\sigma''c'\\
		& \phantom{sajjad} + \left<\grad^{c'}\sigma, c''\right>c' + \sigma'c''\notag\\
		&= - \nabla_{\widetilde{c}'} \grad^{c'}\sigma - 2\sigma'\grad^{c'}\sigma + 3 {\sigma}'c''  \notag\\
		&\phantom{sajjad} + \Big(4(\sigma')^2 +  3\sigma'' - \left<c', \nabla_{\widetilde{c}'} \grad^{c'} \sigma\right> - \|\grad^{c'} \sigma\|^2\Big)c'\notag.
	\end{align}
	Thus,
	\begin{align}\label{eq:concirc}
		&\widebar{\nabla}_{\widetilde{c}'}\widebar{\nabla}_{\widetilde{c}'} c' - \nabla_{\widetilde{c}'}\nabla_{\widetilde{c}'} c' - 3 \left<\widebar{\nabla}_{\widetilde{c}'}c', c'\right>\widebar{\nabla}_{\widetilde{c}'}c'\\ 
		&= - \nabla_{\widetilde{c}'} \grad^{c'}\sigma - 2\sigma'\grad^{c'}\sigma + 3 {\sigma}'c''  \notag\\
		&\phantom{sajjad} + \Big(4(\sigma')^2 +  3\sigma'' - \left<c', \nabla_{\widetilde{c}'} \grad^{c'} \sigma \right> - \|\grad^{c'} \sigma\|^2\Big)c'\notag\\
		&\phantom{sajjad} - 3\sigma'\left( c'' - \grad^{c'}\sigma + 2\sigma'c' \right)\notag\\
		&= - \nabla_{\widetilde{c}'} \grad^{c'}\sigma + \sigma'\grad^{c'}\sigma \notag\\
		&\phantom{sajjad} + \Big(3\sigma''-2(\sigma')^2- \left<c', \nabla_{\widetilde{c}'} \grad^{c'} \sigma\right> - \|\grad^{c'} \sigma\|^2\Big)c'\notag .
	\end{align}
	\par By the characterization of geodesic circles (with arbitrary parameters) in terms of tangency relations (see~\textsection\thinspace\ref{subsec:geod-circ}), we know that for a \cpt, the LHS in \eqref{eq:concirc} is tangent to $c'$. Therefore, one concludes that
	\begin{align*}
		\nabla_{\widetilde{c}'} (\grad^{c'} \sigma) - g(c', \grad^{c'}\sigma)\grad^{c'}\sigma \equiv 0 \mod c'.
	\end{align*}
\par On the other hand, upon applying the standard change of parameter formula,
	\begin{align*}
		\dddt{c} - 3v^{-2}g_{\dt{c}}(\dt{c}, \ddt{c})\ddt{c} = v^3c''' + \Big( \left( v^{-1}g_{\dt{c}}(\dt{c}, \ddt{c}) \right)^{\dt{}} - 3v^{-3}g_{\dt{c}}\left(\dt{c}, \ddt{c}\right)^2 \Big) c',
	\end{align*}
(easily verified by basic computation or see~\cite{Kuh}), to the metric $\widebar{g}$, noting
	\begin{align*}
		v = \|c'\|_{\widebar{g}} = e^{\sigma}\|c'\|_g = e^\sigma,
	\end{align*}
	we obtain
	\begin{align*}
		\widebar{\nabla}_{\widetilde{c}'}\widebar{\nabla}_{\widetilde{c}'} c'- 3 \left<\widebar{\nabla}_{\widetilde{c}'}c', c'\right>\widebar{\nabla}_{\widetilde{c}'}c'&=
		\widebar{\nabla}_{\widetilde{c}'}\widebar{\nabla}_{\widetilde{c}'} c'- 3 \frac{\widebar{g}(\widebar{\nabla}_{\widetilde{c}'}c', c')}{\widebar{g}\left(c',c' \right)}\widebar{\nabla}_{\widetilde{c}'}c' \\&= e^{3\sigma}c^{\ast\ast\ast} + \Big( \left( e^{-\sigma}\widebar{g}({c}', \widebar{\nabla}_{\widetilde{c}'} c') \right)' - 3e^{-3\sigma}\widebar{g}\left({c}', \widebar{\nabla}_{\widetilde{c}'} c'\right)^2 \Big) c^\ast.
	\end{align*}
	According to item (\ref{lemma:item4}) in Lemma~\ref{lem:key-1}, we get
	\begin{align*}
		\widebar{g}\left({c}', \widebar{\nabla}_{\widetilde{c}'} c'\right) = e^{2\sigma} \left<c',\widebar{\nabla}_{\widetilde{c}'} c'\right> = e^{2\sigma}\sigma';
	\end{align*}
	hence,
	\begin{align*}
		\left( e^{-\sigma}\widebar{g}({c}', \widebar{\nabla}_{\widetilde{c}'} c') \right)' = e^{\sigma}\left( (\sigma')^2 + \sigma''\right).
	\end{align*}
	We note that,
	\begin{align*}
		c^\ast = e^{-\sigma}c', \quad \text{and}, \quad c^{\ast\ast\ast} = -\widebar{\kappa}^2 c^{\ast},
	\end{align*}
	and these give us
	\begin{align*}
		\widebar{\nabla}_{\widetilde{c}'}\widebar{\nabla}_{\widetilde{c}'} c'- 3 g(\widebar{\nabla}_{\widetilde{c}'}c', c')\widebar{\nabla}_{\widetilde{c}'}c'&=
		e^{3\sigma}c^{\ast\ast\ast} + e^{\sigma}\Big( \sigma'' - 2(\sigma')^2  \Big) c^\ast\\
		&= \Big( - e^{2\sigma} \widebar{\kappa}^2 + \sigma'' - 2(\sigma')^2 \Big) c'.
	\end{align*}
	Consequently,
	\begin{align}\label{eq:2nd-form}
		\widebar{\nabla}_{\widetilde{c}'}\widebar{\nabla}_{\widetilde{c}'} c' - \nabla_{\widetilde{c}'}\nabla_{\widetilde{c}'} c' - 3 g(\widebar{\nabla}_{\widetilde{c}'}c', c')\widebar{\nabla}_{\widetilde{c}'}c'= \left( \kappa^2 - e^{2\sigma}\widebar{\kappa}^2 + \sigma'' - 2(\sigma')^2 \right)c'.
	\end{align}
	Upon combining \eqref{eq:concirc} and \eqref{eq:2nd-form} we get
	\begin{align*}
		&	- \nabla_{\widetilde{c}'} \grad^{c'}\sigma + \sigma'\grad^{c'}\sigma + \Big(3\sigma''-2(\sigma')^2- \left<c', \nabla_{\widetilde{c}'} \grad^{c'} \sigma\right> - \|\grad^{c'} \sigma\|^2\Big)c' \\
		& = \left( \kappa^2 - e^{2\sigma}\widebar{\kappa}^2 + \sigma'' - 2(\sigma')^2 \right)c',
	\end{align*}
	or
	\begin{align}\label{eq:along-c}
		\nabla_{\widetilde{c}'} \grad^{c'}\sigma - \sigma'\grad^{c'}\sigma = \Big(2\sigma''- \left<c', \nabla_{\widetilde{c}'} \grad^{c'} \sigma\right> - \|\grad^{c'} \sigma\|^2 + e^{2\sigma}\widebar{\kappa}^2 -  \kappa^2 \Big)c'.
	\end{align}
	Therefore,
	\begin{align*}
		f(x,c') &= 2\sigma''- \left<c', \nabla_{\widetilde{c}'} \grad^{c'} \sigma\right> - \|\grad^{c'} \sigma\|^2 + e^{2\sigma}\widebar{\kappa}^2 -  \kappa^2\\
		&= \sigma'' + \left< c'', \grad^{c'}\sigma \right> - \|\grad^{c'} \sigma\|^2 + e^{2\sigma}\widebar{\kappa}^2 -  \kappa^2.
	\end{align*}
\par The second expression for $f$ is easily obtained by contracting both sides with $c'$ and using metric compatibility. 
\end{proof}
\dunderline{1.2pt}{\small \textit{\textbf{\textsf{Notation:}}}}
\textsl{As is customary in hypersurface theory, when $u^1$ is the normal coordinate to a family of hypersurfaces, we signify the indices $2,\cdots,n$ (corresponding to hypersurface coordinates $u^2,\cdots,u^n$) by small case Greek letters, $\alpha,\beta,\cdots$.}
\begin{theorem}[Geometry of local regular solutions-I]\label{thrm:geom-1}
	Suppose $\sigma$ satisfies \eqref{eq:main-3} along geodesic circles. Around a regular point $x$ of $\sigma$, the following properties hold for the (regular) level set $\Sigma_x:= \sigma^{-1}(\sigma(x))$.
	\begin{enumerate}
		\item \label{path-char:item1} The only unit-speed forward geodesics that are mapped to geodesics are those that solve
		$
		c'(s) =  \pm {\sf n}_{c(s)}
		$.
		\par Consequently, passing through each regular point, there is precisely one geodesic that is mapped to a geodesic. The integral curves of the field ${\sf n}$ will be called normal geodesics.
		\smallskip
		\item \label{path-char:item2} There exist local coordinates $(u^1 = u,u^2, \cdots, u^n)$ where $u^2, \cdots, u^n$ form coordinates for $\Sigma_x$ and $u$ is the arc-length of the normal geodesics i.e. the coordinate $u$ is the one that integrates 
		\begin{align*}
			\partial_u := \nicefrac{\grad^{\y} \sigma}{F(\grad^{\y} \sigma)} = \nicefrac{\grad \sigma}{F(\grad \sigma)};
		\end{align*}
		see~\textsection\thinspace\ref{subsubsec:grad}.
		In these coordinates, $f$ takes the form
		\begin{align*}
			f = \sigma_{uu} + \sigma_u^2 = -\nicefrac{\rho''}{\rho}.
		\end{align*}
		In particular, the function $f$ is a \underline{scalar function} that is locally constant on  $\Sigma_x$; thus, locally $f$ is a smooth function of $\sigma$.
		\smallskip
		\item \label{path-char:item3} $g_{1\alpha}(\y) \equiv 0$ and $g_{_{11}}({\sf n}) = \|{\sf n}\|_{\sf n} = F({\sf n}) \equiv 1$;  as a result, $F^2(\grad \sigma)$ is locally constant along $\Sigma$
		\smallskip
		\item \label{path-char:item4} 
		\makebox[\linewidth]{\(\begin{aligned}[t]
				\nabla_{\dt{\partial}_j} \grad^{c'}\sigma = 0.
			\end{aligned}\)}
		\smallskip
		\item \label{path-char:item5} In the coordinates given above, we have
		\begin{enumerate} 
			\smallskip
			\item \label{path-char:item5-(a)} $C_{111} = C_{11\alpha} = C_{1\alpha\beta} = 0$, consequently, $g_{_{11}} \equiv 1$ and $\grad^{\y}\sigma$ is independent of $\y$. Furtheremore, $F(- {\sf n}) = 1$.
			\smallskip
			\item \label{path-char:item5-(b)} $G^s(\sf n) = 0$ for all $1\le s \le n$. 
			\smallskip
			\item  \label{path-char:item5-(c)} $\dt{\partial}_1 G^i \equiv 0$ i.e. $G^i_1\equiv 0$ and consequently, $\dt{\partial}_1 G^i_j \equiv 0$.
		\end{enumerate}
	\end{enumerate}
\end{theorem}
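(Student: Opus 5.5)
The plan is to extract everything from the pathwise identity \eqref{eq:main-3}, applied to geodesics (circles with $\kappa=0$) and to general circles through a fixed point with a fixed initial direction. The key step is to vary the curve while keeping the base point and $c'$ fixed. By \textsection\thinspace\ref{subsec:geod-circ}, for fixed $(p,u)$ with $F(u)=1$ we may prescribe any $c''(0)=v$ with $g_u(u,v)=0$. In \eqref{eq:main-3}, the term $\nabla_{\widetilde c'}\grad^{c'}\sigma$ depends on $c''$ only through its vertical part, which by \eqref{eq:cov-der} is $\ddt{x}+2G$, i.e. through $c''$ itself via the Cartan tensor. Comparing two circles with the same $(p,u)$ but different $v$ then gives
\[
C(v,\grad^{u}\sigma,\cdot)^\sharp=\text{(multiple of }u)
\qquad\text{for all } v\perp_{g_u} u ,
\]
which is the statement $\nabla_{\dt\partial_j}\grad^{c'}\sigma=0$ of item~(\ref{path-char:item4}), after also using that $C(u,\cdot,\cdot)=0$. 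Consequently the left side of \eqref{eq:main-3} depends only on $(x,c')$, and $f=f(x,c')$.

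Next, for item~(\ref{path-char:item1}), I take $c$ a geodesic and use the second form of $f$. If the image is also a geodesic, then $\bar\kappa=\kappa=0$ and $c''=0$. Comparing the two expressions for $f$ forces $\|\grad^{c'}\sigma\|^2=(\sigma')^2=g(c',\grad^{c'}\sigma)^2$. By equality in Cauchy--Schwarz, $c'=\pm\grad^{c'}\sigma/\|\grad^{c'}\sigma\|$. Uniqueness of the steepest-ascent direction (\textsection\thinspace\ref{subsubsec:grad}) identifies this with $\pm{\sf n}$. Conversely, along integral curves of ${\sf n}$, \eqref{eq:main-3} shows $\nabla_{c'}{\sf n}$ is tangent to ${\sf n}$. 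Since it is also orthogonal to ${\sf n}$ by metric compatibility \eqref{eq:cart-met-comp}, it vanishes, so these curves are geodesics.

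For item~(\ref{path-char:item2}), I build coordinates by flowing $\Sigma_x$ along ${\sf n}$, with $u$ the arc length. Evaluating the second form of $f$ along a normal geodesic gives $f=\sigma_{uu}+\sigma_u^2$ directly, and the identity $f=-\rho''/\rho$ follows from $\rho=e^{-\sigma}$. To show $f$ is constant on $\Sigma_x$, I contract \eqref{eq:main-3} with a tangent field $\partial_\alpha$ (using $g_{1\alpha}=0$, which I prove first) and differentiate. I would argue as in the Riemannian case: $\nabla\grad\sigma-d\sigma\otimes\grad\sigma=f\,\mathrm{Id}$, and taking the horizontal derivative in the $\alpha$ direction, skew-symmetrizing, and using the Ricci identity together with the fact that the curvature contraction with the pole $\grad\sigma$ is tangential, yields $f_\alpha=0$.

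For item~(\ref{path-char:item3}), $g_{1\alpha}=0$ is the $g^{\sf n}$-orthogonality of $\grad\sigma$ to level sets. Since $\sigma_\alpha=0$, the same holds in every $g^{\y}$, because $\grad^{\y}\sigma$ is parallel to ${\sf n}$. $F({\sf n})=1$ holds by construction. Constancy of $F(\grad\sigma)=\sigma_u$ on $\Sigma$ follows by applying $\partial_\alpha$ to $\sigma_u$ and using the normal-geodesic structure, together with $\sigma_{u\alpha}=\partial_u\sigma_\alpha=0$.

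Finally, for item~(\ref{path-char:item5}), I differentiate $g_{1\alpha}(\y)\equiv0$ and $g^{\y}(\grad^{\y}\sigma,\cdot)=d\sigma$ vertically in $\y$. Item~(\ref{path-char:item4}) then kills $C(\grad\sigma,\cdot,\cdot)$, giving $C_{1jk}=0$ and hence $y$-independence of $g_{11}$ and of $\grad^{\y}\sigma$. The relation $F(-{\sf n})=1$ comes from $g_{11}\equiv1$ on the whole $u$-line in $TM$. Part~(\ref{path-char:item5-(b)}) is the geodesic equation for the coordinate lines $x=(u,\text{const})$. Part~(\ref{path-char:item5-(c)}) comes from $G^i_1=\dt\partial_1G^i$, computed from the spray formula using $C_{1jk}=0$ and $g_{1\alpha}=0$.

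The main obstacle I expect is the constancy of $f$ on $\Sigma_x$ in item~(\ref{path-char:item2}). The Finslerian Ricci identity carries $P$-curvature terms, and I would need items~(\ref{path-char:item4}) and~(\ref{path-char:item5}) to show that these contract to zero against $\grad\sigma$. The order may therefore need to be rearranged so that item~(\ref{path-char:item5}) is proved before that step.
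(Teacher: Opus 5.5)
Your proposal has two genuine gaps, and the first one comes from the statement itself.

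\textbf{Item (5)(c) is false, so it cannot be proved your way.} In the spray formula, $\dot\partial_1$ applied to $y^k\partial_k\dot\partial_lF^2$ also hits $y^k$. This leaves the term $\partial_1\dot\partial_lF^2$. Using $C_{1jk}=0$ and $g^{1\alpha}=0$ one gets exactly $G^\alpha_1=\tfrac12 g^{\alpha\beta}\partial_1 g_{\beta\gamma}y^\gamma$, which vanishes only if $\partial_1 g_{\beta\gamma}=0$.

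A concrete counterexample: on $\mathbb{R}^n$ take $\rho=1+|x|^2$, i.e.\ inverse stereographic projection, a nontrivial \textsf{CPT}. With $u=r_0-r$ along the inward radial geodesics, $F^2=(y^1)^2+r^2h_{\alpha\beta}y^\alpha y^\beta$, and so $G^\alpha_1=-y^\alpha/r\neq0$.

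Structurally, once the adapted coordinates exist, the $\partial_\alpha$-component of \eqref{eq:main-3} reads $\sigma_uG^\alpha_1=f\,y^\alpha$. So $G^\alpha_1\equiv0$ would force $f\equiv0$. What is true is $G^1_1\equiv0$, because $g^{1l}=\delta^{1l}$ and $\dot\partial_1F^2=2y^1$ give $G^1=-\tfrac14\partial_1F^2$; item (5)(b) also holds. The paper's argument for (5)(c) drops the same term.

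\textbf{Items (2)--(3) are circular as you set them up.} Flowing $\Sigma_x$ along $\mathsf{n}$ gives fields $\partial_\alpha$ that are tangent to $\sigma$-levels off $\Sigma_x$ only if the flow carries level sets to level sets. That holds only if $F(\grad\sigma)=d\sigma(\mathsf{n})$ is constant on level sets.
\begin{itemize}
\item You derive that constancy from $\sigma_{u\alpha}=\partial_u\sigma_\alpha=0$, which presupposes $\sigma_\alpha\equiv0$.
\item Your $g_{1\alpha}=0$, which you also feed into the Ricci-identity step, rests on the same fact.
\item The paper merely asserts that the $u$-levels are $\sigma$-levels.
\end{itemize}

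The missing step is first order, and it builds on your item (4).
\begin{itemize}
\item Your argument for (4), varying $c''(0)$, is correct. It is better than the paper's, which relies on the claim in \textsection\thinspace\ref{subsubsec:grad} that the direction of $\grad^y\sigma$ is $y$-independent; that claim is false for general Finsler metrics.
\item It yields $\dot\partial_j\sigma^k=-2C^k{}_{jr}\sigma^r=0$, i.e.\ $\grad^y\sigma=\grad\sigma$ for all $y$. State this before items (1) and (3), which use it.
\item Take a unit $X\in T_p\Sigma_x$ and the geodesic $c$ with $c'(0)=X$. Then \eqref{eq:cart-met-comp} and \eqref{eq:main-3} give $\frac{d}{ds}\|\grad^{c'}\sigma\|^2_{c'}=2\sigma'\bigl(\|\grad^{c'}\sigma\|^2_{c'}+f\bigr)$, which vanishes at $s=0$. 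Meanwhile $\|\grad^{c'}\sigma\|^2_{c'}=d\sigma(\grad\sigma)=F(\grad\sigma)^2$.
\item Hence $F(\grad\sigma)$ is locally a function of $\sigma$. So $\sigma$ solves an autonomous ODE in $u$ with the same initial value on $\Sigma_x$, and $\sigma=\sigma(u)$.
\item Now $g_{1\alpha}\equiv0$ and (5)(a) follow as you describe.
\end{itemize}

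Finally, the $\partial_1$-component of \eqref{eq:main-3} is $y^1\sigma_{uu}+\sigma_uG^1_1-\sigma_u^2y^1=f(x,y)\,y^1$. With $G^1_1=0$ this gives $f(x,y)=\sigma_{uu}-\sigma_u^2=-\rho''/\rho$ for $y^1\neq0$, hence for all $y$ by continuity. Note the minus sign; the plus in the statement is a typo.

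This shows both that $f$ is constant on $\Sigma_x$ and that it is independent of direction. You never address the latter. No Ricci identity is needed. As you set it up, that route is not available anyway: \eqref{eq:main-3} only controls $\nabla_{X^h}\grad\sigma$ with reference vector $X$ itself, and at that stage $f$ is still a function on $SM$.
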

\begin{proof}[\footnotesize \textbf{Proof}]
	\emph{Throughout this proof, $X$ is assumed to be tangent to the regular level set $\Sigma_x$}. 
	\par First, an elementary observation. By the definition of gradients in~\textsection\thinspace\ref{subsubsec:grad}, we know that along a unit-speed curve $c(s)$, the inequality $\|\grad^{c'}\sigma\|_{c'} \ge \sigma'$ holds, with the equality if and only if 
	\begin{align*}
		c' = F(\grad^{c'}\sigma)^{-1}\grad^{c'}\sigma.
	\end{align*}
	This, in particular, implies
	\begin{align*}
		F(\grad^{c'}\sigma) = g_{c'}(\grad^{c'}\sigma,\grad^{c'}\sigma)^{\nicefrac{1}{2}}.
	\end{align*}
	\par As a result,
	$
	\|\grad^{c'}\sigma\|^2 = (\sigma')^2$
	if and only if
	\begin{align*}
		\pm c' = \nicefrac{\grad^{c'}\sigma}{F(\grad^{c'}\sigma)} = \nicefrac{\grad \sigma}{\|\grad \sigma\|_{\grad \sigma}} = \nicefrac{\grad \sigma}{\|\grad \sigma\|_{c'}}.
	\end{align*}
	This means that such a $c'$ solving the above relation must be $\pm \nicefrac{\grad \sigma}{F(\grad \sigma)} = \pm {\sf n}$. Note the nontrivial implicit fact here, that $F(-{\sf n}) = 1$; this will be established in item (\ref{path-char:item3}). 
\begin{enumerate}
		\item []{\bf{\footnotesize \textsf{Proof of (\ref{path-char:item1})}}}. Contracting both sides of \eqref{eq:along-c} with $c'$, we deduce that
		\begin{align*}
			\left<\nabla_{\widetilde{c}'} \grad^{c'}\sigma, c'\right> - (\sigma')^2 = 2\sigma''- \left<c', \nabla_{\widetilde{c}'} \grad^{c'} \sigma\right> - \|\grad^{c'} \sigma\|^2 + e^{2\sigma}\widebar{\kappa}^2 -  \kappa^2,
		\end{align*}
		yielding
		\begin{align*}
			2\sigma''- 2\left<c', \nabla_{\widetilde{c}'} \grad^{c'} \sigma\right> - \|\grad^{c'} \sigma\|^2 + e^{2\sigma}\widebar{\kappa}^2 -  \kappa^2 + (\sigma')^2 = 0.
		\end{align*}
		Using metric compatibility, we get
		\begin{align*}
			2 \left<\grad^{c'}\sigma, c''  \right> - \|\grad^{c'} \sigma\|^2 + e^{2\sigma}\widebar{\kappa}^2 -  \kappa^2 + (\sigma')^2 = 0.
		\end{align*}
		Thus, we deduce that along a geodesic $c$,
		\begin{align*}
			\|\grad^{c'} \sigma\|^2  - (\sigma')^2 = e^{2\sigma}\widebar{\kappa}^2.
		\end{align*}
		Therefore, if $c$ is a geodesic for $\widebar{g}$ as well, the it solves
		$
		\pm c' = \nicefrac{\grad^{c'} \sigma}{F(\grad^{c'}\sigma)},
		$
		which, by the argument at the beginning of this section, means that $c'$ must be $\pm {\sf n}$.
		\par Next, we show that the normal curves $\gamma_{\sf n}(s)$ with tangent ${\sf n}$ indeed are geodesics. By the \eqref{eq:main-3}, we obtain
		\begin{align}\label{eq:pde-normal-dir}
			\nabla_{\widetilde{\gamma_{\sf n}}'} \grad\sigma - d\sigma({\sf n})\grad \sigma = f{\sf n},
		\end{align}
		which implies 
		$
		n \parallel \nabla_{\widetilde{\gamma_{\sf n}}'} {\sf n}.
		$
		On the other hand, since $\|{\sf n}\| = 1$ and by metric compatibility, we know that
		$
		\nabla_{\widetilde{\gamma_{\sf n}}'} {\sf n} \perp {\sf n}.
		$
		Hence,
		$
		\nabla_{\widetilde{\gamma_{\sf n}}'} {\sf n} = 0
		$
		which means that (the forward) integral curves of the field ${\sf n}$ are unit-speed geodesics. 
		\par In contrast, the forward integral curves of $-{\sf n}$ are geodesics only when the geodesic in the direction ${\sf n}$ is reversible. This is indeed the case since we have $\widetilde{\gamma_{-{\sf n}}}' = - \widetilde{\gamma_{{\sf n}}}'$ both sides of \eqref{eq:pde-normal-dir} are linear in ${\sf n}$ and by noting $F(-{\sf n}) = 1$ as will be shown in item (\ref{path-char:item3}) below.
		\par Therefore, the only forward geodesics that are mapped to geodesics are those in the direction of $\pm{\sf n}$. Hence, as sets, there is only one geodesic curve (as point sets) that is mapped to a geodesic. Along the forward geodesic in the direction $\sf n$, $\sigma$ is increasing. 
		\smallskip
		\item []{\bf{\footnotesize \textsf{Proof of (\ref{path-char:item2})}}}. 
		\par Take $u^1 = u$ to be the forward arc-length parameter along the normal curves. Choose any coordinate system $u^2,\cdots, u^{n-1}$ on the regular hypersurface $\sigma^{-1}(c)$. Extend these coordinates to a neighborhood of the point by keeping them to be constant along the normal unit-speed geodesic curves. 
		\par Obviously, the distances between $u$-levels are determined by the differences in the values of $u$. Clearly, $u$-levels correspond to the same hypersurface ($\sigma$-levels). Therefore, $\sigma$ is a function of $u$ and we write $\sigma(u)$. Clearly
		$
		g^{\sf n}(\partial_1, \partial_1) = 1
		$
		and
		$
		g^{\sf n}(\partial_1, \partial_{\alpha}) = 0
		$
		i.e. $\partial_u = \partial_1 = {\sf n}$, but we continue to write them separately. 
		\par Since normal curves are geodesics, one has 
		\begin{align*}
			f\partial_1 = \nabla_{\widetilde{\gamma_{\sf n}}'} \grad \sigma - (\sigma_1)\grad \sigma =\nabla_{\widetilde{\gamma_{\sf n}}'}\left(\sigma_1 \partial_1\right) -  \left(\sigma_1\right)^2\partial_1 = \left(\sigma_{11} - \left(\sigma_1\right)^2\right) \partial_1;
		\end{align*}
		thus,
		\begin{align*}
			f(u) = \sigma_{11} - (\sigma_1)^2.
		\end{align*}
		\smallskip
			\item []{\bf{\footnotesize \textsf{Proof of (\ref{path-char:item3})}}}. 
		\par Since $\grad^{\y} \sigma$ is perpendicular to $\Sigma$ w.r.t. $g^{\y}$, we deduce that $g^{\y}(\partial_1,\partial_\alpha) = 0$ for all $\y$. So, $g_{1\alpha} \equiv 0$. Consequently, from the block form of $(g_{ij})$, we also deduce that $g^{1\alpha} \equiv 0$.
		\smallskip
		\item []{\bf{\footnotesize \textsf{Proof of (\ref{path-char:item4})}}}. 
	 \begin{align*}
	 	\nabla_{\dt{\partial}_j} \grad^{c'}\sigma &= \nabla_{\dt{\partial}_j} (\sigma^r\partial_r) \\
	 	&= \dt{\partial}_j(g^{il}\sigma_l)\partial_i + \sigma^r C_{rj}^k \partial_k \\
	 	&= \dt{\partial}_j(g^{il})\sigma_1\delta_{i1}\partial_1\\
	 	&= 0,
	 \end{align*}
	in which we have used $y^kC^i_{jk} = 0$ and $g^{1l} = \delta^{1l}$.
	\smallskip
	\item []{\bf{\footnotesize \textsf{Proof of (\ref{path-char:item5})}}}.
\begin{enumerate}
		\item []{{\footnotesize \textsf{Proof of (\ref{path-char:item5-(a)})}}}. Clearly from $g_{1\alpha}\equiv 0$, one deduces that $C_{1i\alpha} \equiv 0$; in particular, $C_{11\alpha} \equiv 0$. Due to the block form of $(g_{ij})$, we deduce that $g^{1\alpha} \equiv 0$.
		\par Utilizing $C_{1i\alpha} \equiv 0$ along with the positive $0$-homogeneity of $g$, we get
		\begin{align*}
		g_{11}(\y)  = g_{11}({\sf n}),\quad \text{when}, \quad y^1 > 0,
		\end{align*}
		and
		\begin{align*}
		g_{11}(\y)  = g_{11}(- {\sf n}),\quad \text{when}, \quad y^1 < 0.
		\end{align*}
		Indeed, this follows by considering the smooth path
		\begin{align*}
			(y^1,{\text{$\small \boldsymbol{0}$}}) + t(0, y^2,\cdots, y^n), \quad 0\le t\le 1
		\end{align*} 
		along which, $g_{11}$ is constant. 
		\par By continuity, this implies $g_{11}(\y) \equiv  g_{11}(-{\sf n}) \equiv g_{11}({\sf n}) = 1$ for all $\y \neq 0$; in particular, $C_{111} \equiv 0$.
		\smallskip 
		\item []{{\footnotesize \textsf{Proof of (\ref{path-char:item5-(b)})}}}. The unit normal geodesics, in coordinates, are given by $x^{i}(s) = \delta^{i0}s$ (after perhaps adjusting by a constant shift). Using the geodesic ODE in local coordinates (see \eqref{eq:cov-der}), we deduce that $G^i(x,{\sf n}) = 0$ for all $i$ on the regular set of $\sigma$.
		\smallskip
		\item []{{\footnotesize \textsf{Proof of (\ref{path-char:item5-(c)})}}}. Note, by item (\ref{path-char:item5-(a)}) above, that
		\begin{align*}
		F^2(\y) = \left(y^1 \right)^2 + g_{\alpha\beta}y^\alpha y^\beta;
		\end{align*}
		therefore, by item (\ref{path-char:item5-(a)}) above, 
		\begin{align*}
		\dt{\partial}_1 F^2 = 2y^1 + C_{1\alpha\beta}y^\alpha y^\beta = 2y^1.
		\end{align*}
		As a result,
		\begin{align*}
		\partial_j \dt{\partial}_1 F^2 = 0, \quad \forall j;
		\end{align*}
	 substituting this in $\dt{\partial}_1G^i$ yields the desired result. 
	\end{enumerate}
	\end{enumerate}
\end{proof}
\begin{theorem}[Coordinate-free characterization]\label{thrm:coord-free}
	$\bar{F} = e^{\sigma} F$ is a local regular \cpt if and only if $\sigma$ satisfies the coordinate-free geometric PDE
	\begin{align}\label{eq:Fins-main-1}
		\nabla_{X^h} (\grad \sigma) -  d\sigma(X)\grad\sigma = f\left(x\right)X, \quad \forall X \in T_xM.
	\end{align}
	or equivalently, 
	\begin{align}\label{eq:Fins-cpt-hess}
		\Hess_F \sigma - d\sigma\otimes d\sigma \circ \ident\times\ident = f\ident, \quad \text{on} \quad SM,
	\end{align}
	or,
	\begin{align}\label{eq:Fins-cpt-horiz-hess}
		^{\sf horiz}\nabla^2 \sigma - d\sigma\otimes d\sigma = f g.
	\end{align}
	Furtheremore, $f$ is the scalar function (i.e., only depending on $x$),
	\begin{align}\label{eq:f-simple}
		f(x) = \sigma''\restr_{\nicefrac{\y}{F(\y)}} - F(\grad \sigma)^2 + e^{2\sigma}\widebar{\kappa}^2,
	\end{align}
	where, $\sigma''\restr_{\nicefrac{\y}{F(\y)}}$ denotes the second derivative in the direction of the unit-speed geodesic with initial velocity $\nicefrac{\y}{F(\y)}$ (indeed the Finslerian Hessian, $Hess_F \sigma (\nicefrac{\y}{F(\y)})$) and $\widebar{\kappa}$ is the curvature of this geodesic w.r.t. $\widebar{F}$. 
\end{theorem}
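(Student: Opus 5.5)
We derive the coordinate-free form from the circle-wise characterization, Theorem~\ref{thrm:path-wise}, and use Theorem~\ref{thrm:geom-1} to show that $f$ is independent of $\y$. Given $x\in M$ and a unit vector $X\in T_xM$ (w.r.t.\ $F$), choose a geodesic circle $c$ with $c(0)=x$ and $c'(0)=X$; the unit geodesic with initial velocity $X$ is such a circle, with $\kappa=0$. Evaluating \eqref{eq:main-3} at $s=0$ gives $\nabla_{X^h}(\grad^{X}\sigma)-d\sigma(X)\grad^{X}\sigma=f(x,X)X$.

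\textbf{Step 1: replace $\grad^{X}\sigma$ by $\grad\sigma$.} By item (\ref{path-char:item5-(a)}) of Theorem~\ref{thrm:geom-1}, $\grad^{\y}\sigma$ is independent of $\y$, so it equals $\grad\sigma$. Item (\ref{path-char:item4}) gives $\nabla_{\dt{\partial}_j}\grad^{\y}\sigma=0$, so the vertical part of $\nabla_{\dt{\widetilde c}}$ does not act on the gradient. Hence $\nabla_{\widetilde c'}\grad^{c'}\sigma=\nabla_{X^h}\grad\sigma$, and we obtain \eqref{eq:Fins-main-1} with $f=f(x,X)$. Both sides of the resulting equation scale linearly in $X$, so the identity extends from unit vectors to all $X\in T_xM$.

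\textbf{Step 2: show that $f$ depends only on $x$.} This is the main obstacle. Item (\ref{path-char:item2}) shows that $f$ is a scalar in the normal direction ${\sf n}$, so it suffices to compare $f(x,X)$ with $f(x,{\sf n})$. Pair \eqref{eq:Fins-main-1} with $Y$ using $g^{X}$. The left side is $({^{\sf horiz}\nabla}^2\sigma-d\sigma\otimes d\sigma)(X,Y)$, which is symmetric in $X,Y$ because the Cartan horizontal connection is torsion-free, i.e.\ $\overstar{\Gamma}$ is symmetric. The right side is $f(x,X)\,g^{X}(X,Y)$.
\begin{itemize}
\item Take $X=\partial_\alpha$ tangent to $\Sigma$ and $Y=\partial_1$. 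Since $g_{1\alpha}\equiv0$, the right side vanishes, and by symmetry the mixed term vanishes as well.
\item Take $X$ with a nonzero $\partial_1$-component and $Y=\partial_1$. By items (\ref{path-char:item3}) and (\ref{path-char:item5-(a)}), $g_{11}\equiv1$ and $g_{1\alpha}\equiv0$ for every $\y$. Hence the $\partial_1$-component of the identity reads $X^1\,\nabla_{\partial_1}\grad\sigma$ plus a term independent of $\y$, which equals $f(x,X)X^1$.
\end{itemize}
Comparing these with the ${\sf n}$ case should force $f(x,X)=f(x,{\sf n})$ whenever $X^1\neq0$. Continuity then covers the case $X^1=0$.

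If this comparison is delicate, the fallback is to use the second expression for $f$ in Theorem~\ref{thrm:path-wise} along geodesics, $f=\sigma''-(\sigma')^2$. Then write $\sigma''=\Hess_F\sigma(X)$ and use $G^i_1\equiv0$ from item (\ref{path-char:item5-(c)}) to relate it to $\sigma_{11}$.

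\textbf{Step 3: the equivalent forms and the converse.} Contracting \eqref{eq:Fins-main-1} with $g^{\y}(\cdot,Y)$ gives \eqref{eq:Fins-cpt-horiz-hess}, since $g(\nabla_{X^h}\grad\sigma,Y)=XY\sigma-\nabla_{(\nabla_{X^h}Y)^h}\sigma$. Setting $X=Y=\y$ and using $\Hess_F\sigma(\y)=y^iy^j\sigma_{ij}-2\sigma_iG^i$ gives \eqref{eq:Fins-cpt-hess}. Formula \eqref{eq:f-simple} follows from \eqref{eq:f-form} along the unit geodesic, where $\kappa=0$, $c''=0$ and $\|\grad^{c'}\sigma\|=F(\grad\sigma)$. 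For the converse, \eqref{eq:Fins-main-1} restricted to any geodesic circle yields \eqref{eq:main-3}, and Theorem~\ref{thrm:path-wise} then shows that $\bar F$ is a \cpt.
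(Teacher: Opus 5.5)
Step 1 and the forward part of Step 3 follow the paper's proof. Item~(\ref{path-char:item4}) of Theorem~\ref{thrm:geom-1} removes the vertical part, item~(\ref{path-char:item5-(a)}) makes the gradient $y$-independent, a unit geodesic realizes every initial tangent, and then you contract with $Y$ and set $X=Y=y$. Your derivation of \eqref{eq:f-simple} from \eqref{eq:f-form} with $\kappa=0$, $c''=0$ is also fine.

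You depart from the paper in Step~2. There the paper merely cites item~(\ref{path-char:item2}), whose proof only evaluates $f$ in the direction $\mathsf{n}$. You are right that an argument is needed, but yours does not yet go through. Knowing that the $\partial_1$-component is $X^1(\cdots)$ ``plus a term independent of $y$'' does not force $f(x,X)=f(x,\mathsf{n})$. A $y$-independent remainder such as $\sigma_1X^\alpha b_\alpha$ would give $f(x,X)=f(x,\mathsf{n})+X^\alpha b_\alpha/X^1$.

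The missing point is that this remainder vanishes. Since $g_{11}\equiv1$ and $g_{1\alpha}\equiv0$ for every $y$ (so $g^{1l}=\delta^{1l}$), one has $\overstar{\Gamma}^1_{1j}=\tfrac{1}{2}\delta_j g_{11}=0$. Together with $\grad\sigma=\sigma_1\partial_1$ and $\partial_\alpha\sigma_1=0$, this gives $g^X(\nabla_{X^h}\grad\sigma,\partial_1)=X^1\sigma_{11}$ exactly. Hence $X^1(\sigma_{11}-\sigma_1^2)=f(x,X)X^1$. Continuity of $f(x,X)=g_X\bigl(\nabla_{X^h}\grad\sigma-d\sigma(X)\grad\sigma,X\bigr)$ on the indicatrix then handles $X^1=0$.

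Your first bullet contributes nothing. Symmetry of ${}^{\mathsf{horiz}}\nabla^2\sigma$ holds for a \emph{fixed} reference vector, whereas your identity only holds with reference vector $X$, so symmetry cannot transport information between directions.

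Your fallback is circular. One has $\Hess_F\sigma(X)=(X^1)^2\sigma_{11}-2\sigma_1G^1(X)$ with $G^1=-\tfrac{1}{4}\partial_1F^2=-\tfrac{1}{4}(\partial_1g_{\alpha\beta})y^\alpha y^\beta$, and $G^i_1\equiv0$ says nothing about this quantity. Evaluating it needs $\partial_1g_{\alpha\beta}=2(f/\sigma_1)g_{\alpha\beta}$, which is item~(\ref{structure:item1}) of Theorem~\ref{thrm:structure}. That item is itself derived from the PDE with $f$ already scalar.

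There is a further point that both you and the paper pass over. The circle-wise identity gives \eqref{eq:Fins-main-1} only at $(x,X)\in TM_0$, i.e.\ with reference vector $y=X$. So contracting with $g^{y}(\cdot,Y)$ for arbitrary $y$ does not yet give the tensor identity \eqref{eq:Fins-cpt-horiz-hess}, which is what Theorem~\ref{thrm:structure} uses. The same component bookkeeping closes this.
\begin{itemize}
\item Set $f_0:=\sigma_{11}-\sigma_1^2$. Since $G^r_1=0$, one has $\overstar{\Gamma}^\alpha_{11}=0$ and $\overstar{\Gamma}^\alpha_{1\beta}=\tfrac{1}{2}g^{\alpha\gamma}\partial_1g_{\beta\gamma}$.
\item The $\partial_\alpha$-components of the diagonal identity then read $h_{\beta\gamma}(X)X^\beta=0$, where $h_{\beta\gamma}:=\partial_1g_{\beta\gamma}-2(f_0/\sigma_1)g_{\beta\gamma}$.
\item Therefore $h_{\beta\gamma}y^\beta y^\gamma=\partial_1F^2-2(f_0/\sigma_1)\bigl(F^2-(y^1)^2\bigr)\equiv0$. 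Applying $\tfrac{1}{2}\dt{\partial}_\beta\dt{\partial}_\gamma$ gives $h\equiv0$.
\item Hence $\nabla_{X^h}\grad\sigma-d\sigma(X)\grad\sigma=f_0X$ holds for every reference vector $y$ and every $X$.
\end{itemize}

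Finally, your one-line converse is circular as written. Passing from \eqref{eq:Fins-main-1} to \eqref{eq:main-3} needs $\grad^{c'}\sigma=\grad\sigma$ and $\nabla_{\dt{\partial}_j}\grad^{y}\sigma=0$; the vertical part of $\nabla_{\widetilde{c}'}$ contributes $-C(c'',\grad^{c'}\sigma,\cdot)^\sharp$. But items~(\ref{path-char:item4}) and (\ref{path-char:item5-(a)}) were proved assuming \eqref{eq:main-3}. The paper does not argue this direction either.
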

\begin{proof}[\footnotesize \textbf{Proof}]
	\par Based on item (\ref{path-char:item4}) of Theorem~\ref{thrm:geom-1}, we get $\nabla_{X^v} \grad^{c'}\sigma = 0$ for any $X \in TM$. Consequently, \eqref{eq:main-3} yields
	\begin{align*}
		\nabla_{\dt{x}^l\delta_l} (\grad^{c'} \sigma) - g(\dt{x}^l\partial_l, \grad^{c'}\sigma)\grad^{c'}\sigma \equiv 0 \mod c'.
	\end{align*}
	Note that $\dt{x}^l\delta_l = h\left(\widetilde{c}'\right) = (c')^h$.
	\par By virtue of the existence result on geodesic circles with any given initial tangent $X$ with $\|X\|=1$  (\textsection\thinspace\ref{subsec:geod-circ}), we get \eqref{eq:Fins-main-1}; noting that the fact that $f$ is a scalar function is established in item (\ref{path-char:item2}) of Theorem~\ref{thrm:geom-1}. 
\par By item (\ref{path-char:item5}) in Theorem~\ref{thrm:geom-1}, $\grad^{\y} \sigma$ is indeed independent of $\y$; consequently, we deduce that
	\begin{align*}
		\nabla_{X^h} (\grad \sigma) - g^{\y}(X, \grad\sigma)\grad\sigma = f(x)X, \quad \forall X \in T_xM,
	\end{align*}
	holds for every $x$ in the regular set of $\sigma$. 
\par Recall that by the $h$-symmetry of the Cartan connection~\cite[Section 2.4.3]{AIM}, the iterated derivative $^{\sf horiz}\nabla^{2}_{X,Y}$ (the linear Hessian) on vector fields is symmetric, thus, defines a symmetric tensor; in particular, only depends on the pointwise values of $X$ and $Y$.  
	\par Contracting both sides of \eqref{eq:Fins-main-1} with the vector field $Y$, we obtain
	\begin{align*}
		\left<\nabla_{X^h} (\grad \sigma), Y\right>_{\y} - d\sigma(X)d\sigma(Y) = f(x)\left< X,Y \right>_{\y},
	\end{align*}
	which gives
	\begin{align*}
		\nabla_{X^h} \left(d\sigma(Y)\right) - d\sigma\left( \nabla_{X^h} Y \right) - d\sigma(X)d\sigma(Y) = f(x)\left< X,Y \right>_{\y}.
	\end{align*}
	This is nothing but
	\begin{align*}
		{^{\sf horiz}\nabla^2\sigma}(X,Y) - d\sigma^2(X,Y) = fg_{\y}(X,Y).
	\end{align*}
	Based on the definition of $\Hess_F$ (\textsection\thinspace\ref{subsubsec:hess}), \eqref{eq:Fins-cpt-hess} is a special case of \eqref{eq:Fins-cpt-horiz-hess} by setting $\y = X = Y = c'$ along the geodesic $c(s)$.
\end{proof}
\begin{remark}
	\textsl{The general form of the coordinate-free characterization can indeed be inferred from the local characterization \eqref{eq:local-char} of Shen-Yang; see~\cite[(80)]{SY}. However, our approach above has the added benefit of providing a geometric form, as appeared in \eqref{eq:f-form} and \eqref{eq:f-simple} for the scalar function $f$. This in turn, lead us to the important geometric observation (about preserved geodesics) of item (\ref{path-char:item1}) of Theorem~\ref{thrm:geom-1}.}
\end{remark}
\section{The geometry of the underlying space} 
\par Using the coordinate-free formulation, we can further investigate the structure of local regular solutions.  
\begin{theorem}[Geometry of local regular solutions-II]\label{thrm:structure}
	Suppose $\sigma$ solves the PDE \eqref{eq:Fins-main-1}. Around a regular point $x$ of $\sigma$, the following properties hold true for the (regular) level set $\Sigma_x:= \sigma^{-1}(\sigma(x))$.
	\begin{enumerate}
	\item  \label{structure:item1}
		\makebox[\linewidth]{\(\begin{aligned}[t]
				\partial_1 g_{\alpha \beta} = 2 \left(\nicefrac{f}{\|\grad \sigma\|}\right)g_{\alpha\beta}.
			\end{aligned}		\)}
		\smallskip
	\item \label{structure:item2} The symbols $\overstar{\Gamma}^i_{jk}$ satisfy
		\begin{align*}
			\overstar{\Gamma}^1_{11} = \overstar{\Gamma}^1_{1\alpha} = \overstar{\Gamma}^\alpha_{11} =  0, \quad 
			\overstar{\Gamma}^\alpha_{1 \beta} = - \left(\nicefrac{\rho''}{\rho'}\right) \delta^\alpha_\beta, \quad \text{and} \quad
			\overstar{\Gamma}^1_{\alpha \beta} = \left(\nicefrac{\rho''}{\rho'}\right)g_{\alpha\beta}.
		\end{align*} 
		\smallskip
	\item \label{structure:item3}
		The flag curvatures $\K({\sf n}, X) = \mathrm{sec}_{g^{\sf n}}(n,X)$ are equal to $-\nicefrac{\rho'''}{\rho'}$, in particular, they are locally constant along $\Sigma$. 
		 \smallskip
		 \item  \label{structure:item4} There exists a Riemannian metric $g_{\sf level}$ on $\Sigma$ (independent of $u$) such that, $g^{\sf n}$ is of the warped product form
		 \begin{align}\label{eq:warped}
		 	g^{\sf n} = du^2 + (e^{-\sigma}\sigma'(u))^2 g_{\sf level} = du^2 + \left(\rho'\right)^2 g_{\sf level}.
		 \end{align}
	\end{enumerate}
\end{theorem}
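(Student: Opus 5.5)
We work in the adapted coordinates $(u^1=u,u^2,\dots,u^n)$ of item (\ref{path-char:item2}) of Theorem~\ref{thrm:geom-1}. There $\partial_1={\sf n}=\nicefrac{\grad\sigma}{F(\grad\sigma)}$, $\sigma=\sigma(u)$, $g_{11}\equiv 1$, $g_{1\alpha}\equiv 0$, $C_{1ij}\equiv 0$, and $\grad^{\y}\sigma=\sigma_u\partial_1$ is independent of $\y$. The plan is to read off everything from the coordinate-free PDE \eqref{eq:Fins-main-1}, written in the component form \eqref{eq:Fins-cpt-horiz-hess}:
\begin{align*}
\sigma_{i|j}-\sigma_i\sigma_j=f g_{ij}, \qquad \sigma_{i|j}=\delta_j\sigma_i-\overstar{\Gamma}^k_{ij}\sigma_k=\partial_j\sigma_i-\overstar{\Gamma}^1_{ij}\sigma_u .
\end{align*}
Since $\sigma_i=\sigma_u\delta_{i1}$ depends only on $x$, the horizontal derivatives reduce to ordinary ones.

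First take $(i,j)=(1,1)$. This gives $\sigma_{uu}-\overstar{\Gamma}^1_{11}\sigma_u-\sigma_u^2=f$, and comparing with $f=\sigma_{uu}-\sigma_u^2$ from Theorem~\ref{thrm:geom-1} yields $\overstar{\Gamma}^1_{11}=0$. Next, $(i,j)=(1,\alpha)$ gives $\overstar{\Gamma}^1_{1\alpha}\sigma_u=0$, so $\overstar{\Gamma}^1_{1\alpha}=0$. Finally, $(\alpha,\beta)$ gives $-\overstar{\Gamma}^1_{\alpha\beta}\sigma_u=fg_{\alpha\beta}$.

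The remaining symbols follow from the defining formula $\overstar{\Gamma}^k_{ij}=\tfrac12 g^{kl}(\delta_ig_{jl}+\delta_jg_{il}-\delta_lg_{ij})$, using the block form of $g$, $g_{11}\equiv1$, and $\delta_i g_{1j}=0$. This last fact holds because $\dt\partial_r g_{1j}=2C_{1jr}=0$, so $\delta_i=\partial_i$ on these components. We obtain $\overstar{\Gamma}^\alpha_{11}=0$, $\overstar{\Gamma}^1_{\alpha\beta}=-\tfrac12\delta_1 g_{\alpha\beta}$ and $\overstar{\Gamma}^\alpha_{1\beta}=\tfrac12 g^{\alpha\gamma}\delta_1 g_{\gamma\beta}$. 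Combining these with the $(\alpha,\beta)$ equation gives $\delta_1 g_{\alpha\beta}=2(\nicefrac{f}{\sigma_u})g_{\alpha\beta}$.

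To pass from $\delta_1$ to $\partial_1$, note that $\delta_1=\partial_1-G^r_1\dt\partial_r$ and $G^r_1\equiv0$ by item (\ref{path-char:item5-(c)}), so $\delta_1=\partial_1$. This proves (\ref{structure:item1}), up to the normalisation $\|\grad\sigma\|=\sigma_u$; I will check the paper's sign and normalisation conventions here. The translation into $\rho=e^{-\sigma}$ is then routine. From $\rho'=-\sigma'\rho$ and $\rho''=\rho(\sigma'^2-\sigma'')=-f\rho$ we get $\nicefrac{f}{\sigma'}=\nicefrac{\rho''}{\rho'}$. This yields (\ref{structure:item2}), after matching signs with the statement, which I expect forces the convention $\overstar{\Gamma}^1_{\alpha\beta}=+(\nicefrac{\rho''}{\rho'})g_{\alpha\beta}$.

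For (\ref{structure:item4}), integrate (\ref{structure:item1}) along $u$. Since $\partial_u\log g_{\alpha\beta}(u,\cdot,\y)=2\rho''/\rho'=\partial_u\log(\rho')^2$, and the coefficient depends only on $u$, we get $g_{\alpha\beta}(u,u^\gamma,\y)=(\rho'(u))^2h_{\alpha\beta}(u^\gamma,\y)$. Evaluating at $\y={\sf n}$ then gives a $u$-independent Riemannian metric $g_{\sf level}:=h(\cdot,{\sf n})$ on $\Sigma$, and $g^{\sf n}=du^2+(\rho')^2g_{\sf level}$. The identity $e^{-\sigma}\sigma'=-\rho'$ accounts for the two written forms, since the sign disappears on squaring.

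For (\ref{structure:item3}), the Jacobi/Riccati approach along the normal geodesics is cleanest. The flag curvature with pole ${\sf n}$ is governed by $R_{\sf n}(X)=\Riem^{\sf n}(X,{\sf n}){\sf n}$. Using $\overstar{\Gamma}^\alpha_{1\beta}=(\nicefrac{\rho''}{\rho'})\delta^\alpha_\beta$ (up to sign) and $\overstar{\Gamma}^\alpha_{11}=0$, the fields $\rho'(u)\partial_\alpha$ are Jacobi fields along $\gamma_{\sf n}$: they are variations through normal geodesics, since the $u^\alpha$ are constant along them. Their covariant derivative is $\rho''\partial_\alpha$, so the Jacobi equation $J''+R_{\sf n}(J)=0$ gives $R_{\sf n}(\partial_\alpha)=-(\nicefrac{\rho'''}{\rho'})\partial_\alpha$. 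Hence $\K({\sf n},X)=-\nicefrac{\rho'''}{\rho'}$, which depends only on $u$ and is therefore locally constant on $\Sigma$. Equivalently, this is the curvature of the warped product \eqref{eq:warped} in radial directions.

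The main obstacle I expect is justifying that the Finslerian flag curvature with pole ${\sf n}$ coincides with the sectional curvature of the Riemannian metric $g^{\sf n}$, and that the Jacobi equation can be computed with the $\y$-frozen symbols. This needs $\dt\partial_1$-independence, namely $G^i_1\equiv0$ and $\dt\partial_1G^i_j\equiv0$ from item (\ref{path-char:item5-(c)}), together with $G^s({\sf n})=0$. Using these, the extra $P$-type and nonlinear terms in $\Riem^{\sf n}({\sf n},X){\sf n}$ vanish along $\gamma_{\sf n}$. I would verify this by writing $R^i_{\ k}=2\partial_kG^i-y^j\partial_j\dt\partial_kG^i+2G^j\dt\partial_j\dt\partial_kG^i-\dt\partial_jG^i\dt\partial_kG^j$ at $\y={\sf n}$ and simplifying each term.
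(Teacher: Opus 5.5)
\textbf{Items (1), (2), (4).} These are essentially the paper's argument. The paper reads $\nabla_{\delta_\alpha}\partial_1=(f/\sigma_1)\partial_\alpha$ off the PDE and combines it with $h$-metricity and $G^r_1\equiv 0$. That is the vector form of your component computation $-\overstar{\Gamma}^1_{\alpha\beta}\sigma_u=f g_{\alpha\beta}$ together with $\overstar{\Gamma}^1_{\alpha\beta}=-\tfrac12\delta_1 g_{\alpha\beta}$ and $\delta_1=\partial_1$. Item (4) is the same integration of $\partial_u\bigl(g_{\alpha\beta}/(\rho')^2\bigr)=0$.

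\textbf{The sign in (2).} Do not try to ``match'' the sign. Your computation gives $\overstar{\Gamma}^1_{\alpha\beta}=-(\rho''/\rho')g_{\alpha\beta}$ and $\overstar{\Gamma}^\alpha_{1\beta}=+(\rho''/\rho')\delta^\alpha_\beta$, and that is correct. It is forced by (1) together with $f/\sigma_u=\rho''/\rho'$, and by the warped form (4). The paper's own line $\overstar{\Gamma}^1_{\alpha\beta}=-\tfrac12\partial_1 g_{\alpha\beta}$ gives the same sign, so the signs printed in the statement are a slip that no convention repairs. On your route to (3) the sign is not cosmetic. With $G^\alpha_\beta({\sf n})=-(\rho''/\rho')\delta^\alpha_\beta$ you would obtain $\rho'''/\rho'-2(\rho''/\rho')^2$ instead of $-\rho'''/\rho'$.

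\textbf{The Jacobi field in (3).} The variation field of the normal geodesics obtained by shifting $u^\alpha$ is $\partial_\alpha$, not $\rho'\partial_\alpha$. Along the geodesic $\gamma_{\sf n}$ one has $\nabla_{\widetilde{\gamma}'}\partial_\alpha=G^k_\alpha({\sf n})\partial_k=\overstar{\Gamma}^k_{1\alpha}({\sf n})\partial_k=(\rho''/\rho')\partial_\alpha$. So $\partial_\alpha=\rho'E_\alpha$ with $E_\alpha:=\partial_\alpha/\rho'$ parallel. Then $J'=\rho''E_\alpha$ and $J''=(\rho'''/\rho')J$, so $R_{\sf n}(\partial_\alpha)=-(\rho'''/\rho')\partial_\alpha$. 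As you wrote it ($\rho'\partial_\alpha$ with derivative $\rho''\partial_\alpha$), you are silently treating $\partial_\alpha$ as parallel.

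\textbf{The obstacle you anticipated.} It is not real. The Finslerian Jacobi equation $D_{\dot\gamma}D_{\dot\gamma}J+R_{\dot\gamma}(J)=0$ only involves $G^i_j(\gamma,\dot\gamma)$, so nothing has to be frozen. Your spray-formula check also goes through. Since $G^i(\cdot,{\sf n})\equiv 0$, the terms $\partial_kG^i$ and $G^j\dot\partial_j\dot\partial_kG^i$ vanish at ${\sf n}$. This leaves $R^i_{\ k}=-\partial_1G^i_k-G^i_jG^j_k$, which evaluates to the Riccati combination $-\bigl[(\rho''/\rho')'+(\rho''/\rho')^2\bigr]\delta^\alpha_\beta=-(\rho'''/\rho')\delta^\alpha_\beta$.

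\textbf{Comparison for (3).} The paper takes a different route. It applies the Ricci identity directly to $\nabla_{\delta_i}\grad\sigma=\sigma_i\grad\sigma+f\partial_i$, using that $\grad\sigma$ has no vertical derivative and that $\overstar{\Gamma}$ is symmetric. This gives
$\Riem(\partial_i,\partial_j)\grad\sigma=f_i\partial_j-f_j\partial_i+f(\sigma_j\partial_i-\sigma_i\partial_j)$, hence $\K=\sigma_u^{-1}(f\sigma_u-f_u)=-\rho'''/\rho'$.
That argument needs neither (2) nor (4), so it is immune to the sign slip. Your route is more computational and depends on (2) with the correct sign. In exchange, combined with (4), it also verifies the asserted equality $\K({\sf n},X)=\mathrm{sec}_{g^{\sf n}}({\sf n},X)$, which the paper does not check separately.
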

\begin{proof}[\footnotesize \textbf{Proof}]
\phantom{}\hfill
\begin{enumerate}
	\item []{\bf{\footnotesize \textsf{Proof of (\ref{structure:item1})}}}.
\par Using $\partial_1 = \delta_1 + G^r_1\dt{\partial}_r$, along with the h-metricity of the Cartan connection, and the fact that $\sigma_1$ is constant along $\Sigma$ we obtain
\begin{align*}
	\partial_1 g_{ij} &= \nabla_{\delta_1 + G^r_1\dt{\partial}_r} \left< \partial_i, \partial_j   \right>_{\y}\\
	&= \left< \nabla_{\delta_i} \partial_1, \partial_j   \right>_{\y} + \left< \partial_1, \nabla_{\delta_j}\partial_1   \right>_{\y} + G^r_1 \Big( \left< \nabla_{\dt{\partial}_r}\partial_i, \partial_j   \right>_{\y} + \left< \partial_i, \nabla_{\dt{\partial}_r} \partial_j   \right>_{\y} \Big)\\
	&= \left< \nabla_{\delta_i} \left(\nicefrac{\grad \sigma}{\sigma_1}\right), \partial_j   \right>_{\y} + \left< \partial_1, \nabla_{\delta_j} \left(\nicefrac{\grad \sigma}{\sigma_1}\right)   \right>_{\y},
\end{align*}
in which, we have used item (\ref{path-char:item5-(c)}) from Theorem~\ref{thrm:geom-1} to obtain the last equality. 
\par Hence, using the PDE \eqref{eq:Fins-main-1}, we get
\begin{align*}
	\partial_1 g_{\alpha \beta} = 2 \left(\nicefrac{f}{\sigma_1}\right)g_{\alpha\beta}
	= 2 \left(\nicefrac{f}{\|\grad \sigma\|}\right)g_{\alpha\beta}.
\end{align*}
\smallskip	
\item []{\bf{\footnotesize \textsf{Proof of (\ref{structure:item2})}}}.
\par The symbols $\overstar{\Gamma}^i_{jk}$, by computing the formal Christoffel symbols and using \eqref{eq:Chris-symbs} (or by direct computation!), are as follows:
\begin{align*}
	\overstar{\Gamma}^1_{11} = {\Gamma}^1_{11}  - g^{11}C_{111}G^1_1 =  {\Gamma}^1_{11} = \nicefrac{1}{2}\,g^{11}\partial_1 g_{_{11}} = 0, 
\end{align*}
\begin{align*}
	\overstar{\Gamma}^\alpha_{11} = {\Gamma}^\alpha_{11} + g^{\alpha \gamma} C_{111}G^1_\gamma = {\Gamma}^\alpha_{11} =  \nicefrac{1}{2} \,g^{\alpha \beta} \partial_\beta g_{_{11}} = 0,
\end{align*}
\begin{align*}
	\overstar{\Gamma}^1_{1\alpha} = {\Gamma}^1_{1\alpha} = \nicefrac{1}{2} \, g^{11} \partial_\alpha g_{_{11}} = 0,
\end{align*}
\begin{align*}
	\overstar{\Gamma}^1_{\alpha \beta} = {\Gamma}^1_{\alpha\beta} + g^{11} C_{\alpha\beta r}G^r_1 =  -\nicefrac{1}{2} \, \partial_1 g_{\alpha \beta} + C_{\alpha\beta \gamma}G^\gamma_1 = \left(\nicefrac{\rho''}{\rho'}\right) \; g_{\alpha\beta},
\end{align*}
\begin{align*}
	\overstar{\Gamma}^\alpha_{1 \beta} = {\Gamma}^\alpha_{1\beta} - g^{\alpha\gamma} C_{\beta\gamma\eta} G^\eta_1 = \nicefrac{1}{2} \, g^{\alpha \eta} \partial_1 g_{\beta \eta} - g^{\alpha\gamma} C_{\beta\gamma\eta} G^\eta_1 = - \left(\nicefrac{\rho''}{\rho'}\right) \; \delta^\alpha_{\beta},
\end{align*}
where, the last two lines follow from $G^\alpha_1 \equiv 0$ combined with item (\ref{structure:item1}) above. 
\smallskip 
\item []{\bf{\footnotesize \textsf{Proof of (\ref{structure:item3})}}}. 
\par By \eqref{eq:Fins-main-1}, we have
\begin{align*}
	\nabla_{\delta_i} (\grad \sigma) = \sigma_i\grad\sigma + f\partial_i.
\end{align*}
\par As a result,  
\begin{align*}
	\Riem(\partial_i,\partial_j)\grad \sigma &=
	\Omega(\delta_i,\delta_j)\grad \sigma\notag\\
	&= \nabla_{\delta_i}\nabla_{\delta_j} \grad \sigma - \nabla_{\delta_j}\nabla_{\delta_i} \grad \sigma - \nabla_{\left[\delta_i, \delta_j \right]} \grad \sigma \notag\\
	&= \nabla_{\delta_i}\left( f\partial_j + \sigma_j \grad\sigma  \right) - \nabla_{\delta_j}\left( f\partial_i + \sigma_i \grad\sigma  \right) - \nabla_{\left[\delta_i, \delta_j \right]} \grad \sigma \\
	&= f_i\partial_j - f_j\partial_i  + f\left( \overstar{\Gamma}^k_{ji} - \overstar{\Gamma}^k_{ij} \right) \partial_k  - \nabla_{h\left[\delta_i, \delta_j \right]} \grad \sigma \notag \\
	& \phantom{sajjad} + \sigma_j\left( \sigma_i\grad\sigma + f\partial_i \right) - \sigma_i\left( \sigma_j\grad\sigma + f\partial_j \right) \notag\\
	&= f_i\partial_j - f_j\partial_i + f\sigma_j\partial_i -f\sigma_i\partial_j\notag.
\end{align*}
This yields
\begin{align*}
\Riem(X, \partial_u) \partial_u = \sigma_u^{-1}\Big(- f_u + f\sigma_u\Big)X;
\end{align*}
therefore, for $\|X\|_{\sf n} = 1$, we get
\begin{align*}
\K({\sf n}\wedge X) = \left<\Riem^{\sf n}(X, \partial_u) \partial_u, X  \right>_{\sf n} = \sigma_u^{-1}\Big(- f_u + f\sigma_u\Big);
\end{align*}
see~\eqref{eq:flag}.  
\smallskip
\item []{\bf{\footnotesize \textsf{Proof of (\ref{structure:item4})}}}.
Now we show 
$
(e^{-\sigma}\sigma'(u))^{-2} g^{\sf n}_{\alpha \beta},
$
is independent of $u$.  
\begin{align}\label{eq:var-sep}
	\partial_u \Big(\nicefrac{g^{\sf n}_{ij}}{e^{-2\sigma}\left(\sigma'\right)^2}\Big) &= \Big(\nicefrac{\left(2e^{-2\sigma}f\sigma_u^{-1}(\sigma_u)^2 + 2e^{-2\sigma}(\sigma_u)^3 - 2e^{-2\sigma}\sigma_u\sigma_{uu}\right)}{\left(e^{-4\sigma}(\sigma_u)^4\right)}\Big) \; g^{\sf n}_{ij} \notag \\
	&= \left( \nicefrac{\left(2f + 2(\sigma_u)^2 - 2\sigma_{uu}\right)}{\left(e^{-2\sigma}(\sigma_u)^3\right)}\right) \; g^{\sf n}_{ij}\notag\\
	&= 0,\notag
\end{align}
where the first equality follows from item (\ref{structure:item1}) above and the last equality follows from item (\ref{path-char:item2}) of Theorem~\ref{thrm:geom-1}.   
\par We denote this constant metric by $g_{\sf level}$. Consequently, $g^{\sf n}$ is expressed as the warped product
\begin{align*}
	g^{\sf n} = du^2 + (e^{-\sigma}\sigma')^2g_{\sf level} = du^2 + \left(\rho'\right)^2g_{\sf level};
\end{align*}
in particular, ${\sf n}$ is a unit geodesic field for $g^{\sf n}$. 
\end{enumerate}	
\end{proof}
\par \dunderline{1.2pt}{\small \textit{\textbf{\textsf{Terminology:}}}} \textsl{For a unit vector field $V$, the resulting Riemannian metric $g^{V}$ (e.g. the metric $g^{\sf n}$ above) is called an osculating metric; see e.g. \cite[Chapter 5]{Oh}}. 

\par \dunderline{1.2pt}{\small \textit{\textbf{\textsf{Notation:}}}} \textsl{In what follows, $\mathcal{C}$ denotes the set of critical point of $\sigma$ (and $\rho$).} 
\begin{theorem}[Critical points]\label{thrm:crit-structure}
	Let $e^{\sigma}F$ be a nontrivial conformal \cpt. Then the critical points of $\sigma$ are isolated.
\end{theorem}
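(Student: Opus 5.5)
We prove that a critical point $p\in\mathcal{C}$ of $\sigma$ is a nondegenerate critical point of $\rho=e^{-\sigma}$ with Hessian a nonzero multiple of the (osculating) metric. Isolatedness then follows from the Morse lemma.

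\textbf{Step 1: the equation in terms of $\rho$.} Rewrite \eqref{eq:Fins-cpt-horiz-hess} for $\rho$. Since $d\rho=-\rho\,d\sigma$, we have
\begin{align*}
{^{\sf horiz}\nabla}^2\rho=-\rho\left({^{\sf horiz}\nabla}^2\sigma-d\sigma\otimes d\sigma\right),
\end{align*}
so ${^{\sf horiz}\nabla}^2\rho=-\rho f\,g^{\y}$. This is the Shen--Yang local characterization \eqref{eq:local-char} with $\lambda=-\rho f$. We use \eqref{eq:local-char} directly, because it holds on all of $M$, including $\mathcal{C}$, and not only on the regular set.

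\textbf{Step 2: nondegeneracy at a critical point.} At $p\in\mathcal{C}$ we have $d\rho(p)=0$. The horizontal Hessian at $p$ then reduces to the coordinate Hessian $\partial_i\partial_j\rho(p)$, since the connection terms multiply $\rho_k(p)=0$. Hence
\begin{align*}
\partial_i\partial_j\rho(p)=\lambda(p)\,g_{ij}(p,\y)\quad \text{for every } \y\neq0.
\end{align*}
Two cases arise.
\begin{enumerate}
\item If $\lambda(p)\neq0$, the Hessian of $\rho$ at $p$ is definite, since $g^{\y}$ is positive definite. By the Morse lemma, $p$ is an isolated critical point. (As a byproduct, $g^{\y}(p)$ is independent of $\y$, so $F$ is Riemannian at $p$.)
\item If $\lambda(p)=0$, we show $\rho$ is locally constant, contradicting nontriviality.
\end{enumerate}

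\textbf{Step 3: the degenerate case via an ODE along geodesics.} Take any unit-speed geodesic $c$ from $p$ and set $h(s)=\rho(c(s))$. By the Hessian characterization (\S\ref{subsubsec:hess}), $h''=\lambda(c(s))$. To close this into a linear ODE system, differentiate the equation once more and commute covariant derivatives, as in the proof of item (\ref{structure:item3}) of Theorem~\ref{thrm:structure}. This gives a relation of the form
\begin{align*}
\lambda_i=-\K\,\rho_i,
\end{align*}
or more precisely $d\lambda$ is a curvature term applied to $d\rho$; this comes from tracing the Ricci identity $\rho_{|i|j|k}-\rho_{|i|k|j}=-\rho_{|r}R^r_{\ ijk}$. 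Then $(\rho_{|i},\lambda)$ satisfies a linear first-order system along every geodesic, with zero initial data at $p$ when $d\rho(p)=0$ and $\lambda(p)=0$. Uniqueness forces $d\rho\equiv0$ along every geodesic from $p$. Hence $\rho$ is constant on a neighborhood of $p$, and then globally by the same argument on the connected manifold $M$. In that case $\sigma$ is constant, so the \cpt is a homothety, contradicting nontriviality.

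\textbf{Main obstacle.} The hard step is deriving $d\lambda$ in terms of $d\rho$ in the Finslerian setting: the Ricci identity for the horizontal derivative brings in $\Riem^{\y}$ and possibly $P$-curvature terms. I would try first to work in a single osculating metric $g^{V}$ along the geodesic, with $V=c'$. Since $\lambda$ is a scalar, I would use the Berwald/Chern $h$-symmetry from the proof of Theorem~\ref{thrm:coord-free} to carry the commutation through. Once the linear system is in hand, the rest is standard ODE uniqueness plus the Morse lemma.
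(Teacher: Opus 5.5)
Your proof is correct, and it takes a genuinely different route from the paper's.

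\textbf{Comparison of routes.} The paper argues by contradiction entirely on the regular set. It takes a boundary point $p$ of the accumulation set of $\mathcal{C}$ and joins it to a nearby regular point by a geodesic with no interior critical points. It then uses three facts: the regular level sets are separating, the Gauss lemma applies to the normal geodesics, and $g^{\sf n}$ has the warped-product form \eqref{eq:warped}. From these it shows that the level sets near $p$ are geodesic spheres centred at $p$, so $p$ is isolated after all.

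You instead analyse each critical point directly, using that the Shen--Yang equation \eqref{eq:local-char} holds across $\mathcal{C}$. At $p$ the connection terms drop out, so $\partial_i\partial_j\rho(p)=\lambda(p)g_{ij}(p,y)$. Then either $\lambda(p)\neq0$, giving a definite Hessian and hence an isolated critical point, or a unique-continuation argument forces $\sigma$ to be constant.

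What each route buys:
\begin{itemize}
\item Your route is local. It avoids the injectivity-radius, separation and distance-comparison steps of the paper's argument.
\item It also proves more: every critical point is a nondegenerate local extremum of $\rho$, and $g^{y}$ is independent of $y$ on $T_pM$ for $p\in\mathcal{C}$.
\item The paper's route stays inside its level-set and warped-product picture. In exchange it shows that the regular level sets near a critical point are geodesic spheres about it.
\end{itemize}

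\textbf{The step you flag as the main obstacle is routine.} Since $\rho_{i|j}=\partial_j\rho_i-\overstar{\Gamma}^s_{ij}(x,y)\rho_s$ with $\overstar{\Gamma}$ symmetric, a direct computation gives
\begin{align*}
\rho_{i|j|k}-\rho_{i|k|j}=-\rho_s\left(\delta_k\overstar{\Gamma}^s_{ij}-\delta_j\overstar{\Gamma}^s_{ik}+\overstar{\Gamma}^t_{ij}\overstar{\Gamma}^s_{tk}-\overstar{\Gamma}^t_{ik}\overstar{\Gamma}^s_{tj}\right)=:-\rho_s R_i{}^s{}_{jk}.
\end{align*}
All second derivatives of $\rho$ cancel. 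No $P$-curvature term appears, because $\rho_i$ does not depend on $y$.

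Horizontal metric compatibility, $g_{ij|k}=0$, then gives
\begin{align*}
\lambda_k g_{ij}-\lambda_j g_{ik}=-\rho_s R_i{}^s{}_{jk}.
\end{align*}
Tracing with $g^{ij}$ yields
\begin{align*}
(n-1)\lambda_k=-\rho_s\, g^{ij}R_i{}^s{}_{jk}.
\end{align*}
For $n\ge 2$ this is linear in $d\rho$, with coefficients smooth on $TM_0$.

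Your formula $\lambda_i=-\K\rho_i$ is only the constant-curvature special case. Linearity in $d\rho$ is all you actually use.

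You also do not need geodesics. Along any curve, with any smooth nonvanishing choice of $y$, the pair $(\rho_i,\lambda)$ solves a linear homogeneous ODE. Hence the set $\{d\rho=0,\ \lambda=0\}$ is open, and it is closed by continuity. On connected $M$ this forces $\rho$ to be constant, i.e.\ a homothety, as you intended.
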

\vspace{-2mm} 
\begin{proof}[\footnotesize \textbf{Proof}]
	By the local constancy of $\sigma$ and $F(\grad \sigma)$ along regular level sets, as shown in Theorem~\ref{thrm:structure}, we know that if $\mathcal{C}$ is a connected component of $\sigma^{-1}(a)$ containing a regular (critical) point, then it must consist entirely of regular (critical) points. As a result, the regular level sets are separating sets; that is they are hypersurfaces with two sides.  
\par Now, contrary to the statement, suppose the set of critical points of $\sigma$, denoted by $\mathcal{C}$, has a limit point. Let $\mathcal{C}_{\sf limit} \neq \varnothing$ denote the limit set of the critical points. Therefore, $\mathcal{C}_{\sf limit}$ is a closed set. Since the manifold is connected, we deduce that $\partial  \mathcal{C}_{\sf limit} \neq  \varnothing$. Let $p\in\partial \mathcal{C}_{\sf limit}$ and $o$ be a regular point nearby within the (forward) injectivity radius. Then, there is a unique geodesic $\eta$ from $p$ to $o$.
		\par Let $q$ be the farthest critical point on $\eta$ from $p$ (w.r.t. forward distance) so that the critical points in the interval $[p,q]$ are dense in $[p,q]$. In particular, the interval $(q,o)$ if not empty, contains an open sub-interval of the form $(q,a)$ that is free of critical points and that $a$ is a regular point. Such a point $q$ exists because the set of regular points is open. Moreover, this $q$ is a limit point for the critical set, i.e., $q \in \partial \mathcal{C}_{\sf limit}$. Henceforth, we adjust the original point $p$ to the position $q$ and adjust $o$ to the position $a$. Note that all distances are measured from $p$; that is, we use the forward distance form $p$.   
		\par By the previous argument, we have chosen a boundary limit critical point $p$ that is joined to a nearby regular point $o$ via a geodesic that contains no interior critical points. And $o$ is within the forward injectivity radius at $p$.
		\par First, assume $p$ is not a maximum point for $\sigma$. The previous argument applies when, shooting in any direction from $p$; hence, if $p$ is not a maximum point for $\sigma$, we can assume (by adjusting $o$), near $o$, $\sigma$ is increasing along the unique geodesic from $p$ to $o$. Let us denote this geodesic by $\mu(s)$; also, we set $\mu(0) = p$.
		\par By the argument at the beginning of this section, for $s>0$, the connected component of $\sigma^{-1}(\sigma(\mu(s)))$ containing the point $\mu(s)$, is entirely regular  and is a closed hypersurface that is also separating; we denote this hypersurface by $\Sigma(s)$. 
		\par Let $z$ be the closest point on $\Sigma_o := \Sigma_{d(p,o)}$ from $p$, then $z$ lies within the injectivity radius at $p$. Let $\eta$ be the unique geodesic from $p$ to $z$. $\eta$ must intersect all the hypersurfaces $\Sigma_s$ since these surfaces are separating sets (that separate $p$ from $\Sigma_o$). 
		\par One can parameterize $\eta$ via the inclusion $\eta(s) \in \Sigma_s$. By the Gauss lemma (e.g. see \cite[Lemma~3.18]{Oh}), $\eta$ coincides with one of the forward \emph{normal geodesics} (see item (\ref{path-char:item1}) of Theorem~\ref{thrm:geom-1}) to the connected level sets $\Sigma_s$ (along which $\sigma$ is increasing) thus the parametrization $\eta(s)$ is indeed the arc-length.
		\par {\footnotesize \sf \textbf{Claim.}} $\eta(0) = p$. 
		\par {\footnotesize \sf \textbf{Proof of the claim:}} Clearly, by the warped product decomposition \eqref{eq:warped}, one has
		\begin{align*}
			\dist_{g^{\sf n}}(\eta(s), \mu(s)) \le \dist_{(\Sigma(s), g_{\sf level})} (\eta(s), \mu(s)) \le \frac{e^{-\sigma(s)}\sigma_u(s)}{-\sigma(d(p,o)) \sigma_u(d(p,o))}d(z,o);
		\end{align*}
		hence,
		$
		\lim_{s \downarrow 0}\dist_{g^{\sf n}}(\eta(s), \mu(s)) = 0.
		$
		which means that $\eta(0)$ must coincide with $\mu(0) = p$. \scalebox{0.6}{$\blacksquare$}
		\par This also shows the level sets $\Sigma_s$ for $s>0$ are geodesic spheres (w.r.t. the forward distance) centered at $p$ both w.r.t. $F$ and w.r.t. $g^{\sf n}$. In other words, $s$ is just the geodesic distance from $p$. Hence, $p$ can be the only critical point in a normal neighborhood around $p$ i.e. $p$ is an isolated point of $\mathcal{C}$. This contradicts the assumption $p \in \partial \mathcal{C}_{\sf limit}$. 
	\par It remains to argue a contradiction if $p$ is indeed a maximum point of $\sigma$. In this case the argument is essentially the same. We assume $\sigma(p) = 0$, use the backward distance from $p$, utilize the Gauss lemma to get a geodesic $\eta(s)$ ($s<0$) and using the fact that normal geodesics are reversible (see item (\ref{path-char:item2}) of Theorem~\ref{thrm:geom-1}), we reach the conclusion that $p$ is an isolated critical point which is a contradiction. 
\end{proof}
\section{Berwaldian rigidity}
\par In this section, $(M,F)$ is taken to be Berwaldian and admitting a nontrivial Finslerian \cpt,  $\bar{F} = e^{\sigma} F$; \emph{it is important to note that we are \underline{not} assuming that $\bar{F}$ is Berwaldian.}  
\par Now, we can present a proof of our main Theorem: ``\emph{a complete Berwaldian manifold $\left(M,F\right)$ admitting a nontrivial conformal circle preserving transformation (\cpt for short) must be Riemannian, provided that it has a dense subset on which no flag curvature vanishes.}'' 
\begin{remark}
	As we will see below, the set of points where some flag curvatures vanish ($\mathcal{Z}$), is a closed subset; thus, being nowhere dense will be equivalent to its complement being a dense open subset. 
\end{remark}
\subsection*{Proof of the Main Theorem}
By items (\ref{structure:item1}) and (\ref{structure:item2}) of Theorem~\ref{thrm:structure} and by the Berwaldian hypothesis, we know that on the regular set,
\begin{align*}
\overstar{\Gamma}^1_{\alpha \beta} = - \nicefrac{1}{2}\, \partial_1 g_{{\alpha \beta}} = - \left(\nicefrac{f}{F(\grad \sigma)} \right) g_{\alpha\beta} = \left(\nicefrac{\rho''}{\rho'}\right) g_{\alpha\beta},
\end{align*}
is independent of $\y$. Hence, when $\rho',\rho'' \neq 0$, we deduce that $g_{\alpha\beta}$ is independent of $\y$ and as are result, so is $g$, i.e., $g$ is Riemannian. 
\par It remains to show $\y$-independence globally. Set
\begin{align*}
\mathcal{S} := \left(\rho''\right)^{-1}(0) \cup \left(\rho'\right)^{-1}(0).
\end{align*}
 By smoothness of the Finslerian metric in $x$, to reach the conclusion, it is sufficient to argue that $\mathcal{S}^{c}$ is dense. 
 \par Since by Theorem~\ref{thrm:crit-structure}, $\mathcal{C} = \left(\rho'\right)^{-1}(0)$ is a discrete set, to show $\mathcal{S}^{c}$ is dense, we only need to show $\left(\rho''\right)^{-1}(0)$ is a nowhere dense set. 
\par Set
\begin{align*}
\mathcal{Z} := \left\{ p \in M \; \text{\textbrokenbar} \; \text{some flag curvatures vanish at $p$}  \right\}.
\end{align*}
By the hypothesis, the set, $\mathcal{Z}$, is a nowhere dense set. Also considering the flag curvature as fiber-wise real function on the bundle $UM \wedge UM$, the set of $(x,X,Y)$ for which $\K(X\wedge Y) = 0$ is closed therefore, its projection to $M$, which is $\mathcal{Z}$, is also closed (as projections preserve closedness). Thus, we deduce that $\mathcal{Z}$ is a closed nowhere dense set. 
\par Consider the dense open set 
\begin{align*}
\mathcal{O} := \left(M \smallsetminus \mathcal{Z}\right) \smallsetminus \mathcal{C}.
\end{align*}
One has
\begin{align*}
-\nicefrac{\rho'''}{\rho'} = \K({\sf n} \wedge X) \neq 0\quad \text{on}\quad \mathcal{O}
\end{align*}
i.e. $\rho''' = \grad \left(\rho''\right) \neq 0$ on $\mathcal{O}$; meaning $\rho''$ is regular on $\mathcal{O}$ (of constant rank $1$). 

\par By the regular level set theorem (or the constant rank theorem), $\left(\rho''\right)^{-1}(0) \cap \mathcal{O}$ is either empty or is an embedded submanifold of co-dimension $1$; in any case, it is a null-measure subset in $\mathcal{O}$ (in particular, is nowhere dense in $\mathcal{O}$) and consequently so is in $M$. Consequently, $\mathcal{S}$ is nowhere dense as it is the union of two such sets. This finishes the proof.
\newline\phantom{}\hfill\scalebox{1.2}{\qed}
\section{Further remarks}
\begin{enumerate}
		\item [1--] 
		\par There exist erroneous treatments of the Finslerian PDE \eqref{eq:Fins-cpt-horiz-hess} in some -- not further specified -- literature that simply overlook the dependence of the spray coefficients on the direction, by claiming that $G^i \equiv 0$. To no surprise, this would lead to very Riemannian-like results. 
		\par Explicitly speaking, we note that the spray coefficients $G^i$ do not necessarily vanish in directions other than ${\sf n}$ (see Theorem~\ref{thrm:structure}), indeed $G^i \equiv 0$ is the characteristic of flat Finslerian metrics, for instance, any Berwaldian such manifold must have vanishing curvature \cite{Raps}. Now, considering a sphere (not flat obviously) in the Euclidean space, the inversion w.r.t. any point outside the sphere is a nontrivial conformal map of spheres, thus, is circle-preserving as well; e.g. see~\cite{Kuh}.  
		\smallskip
	\item [2--] 
	\par The recent article \cite{TN} claims to have established that every Berwaldian manifold with nowhere vanishing flag curvature (so either positive or negative flag curvature) is already Riemannian , hence, extending the well-known rigidity result of Szab\'o on Berwaldian surfaces~\cite{Szabo}. 
	\par Our result, in contrast, allows for a very large set on which some flag curvatures vanish (the bad set), meaning, we allow sign change in the flag curvature and the price to pay to get rigidity, is requiring a nontrivial \cpt. The bad set for example can be a fat fractal or Cantor-like subset, as long as the complement remains dense.
	\smallskip
	\item [3--] 
	\par Since the horizontal covariant differentiation w.r.t. Cartan and Chern connections coincide, the coordinate-free characterization \eqref{eq:Fins-main-1}, holds verbatim in terms of the Chern connection.
	\smallskip  
	\item [4--] 
	\par Clearly a weaker version of our main theorem is: ``\emph{A complete Berwaldian manifold with almost everywhere non-vanishing flag curvatures, cannot admit a nontrivial $\cpt$ unless it is Riemannian.}'' 
\end{enumerate}

\end{document}